\newcommand{\al}{\alpha}
\newcommand{\be}{\beta}
\newcommand{\ep}{\epsilon}
\newcommand{\lam}{\lambda}
\newcommand{\diag}{\text{diag}}
\newcommand{\I}{I_{3\times3}}
\renewcommand{\t}{\times}
\newcommand{\tr}{\text{tr}}
\newcommand{\no}{\nonumber}
\newcommand{\SO}{\ensuremath{\mathsf{SO(3)}}}
\newcommand{\so}{\ensuremath{\mathfrak{so}(3)}}
\newcommand{\T}{\ensuremath{\mathsf{T}}}
\newcommand{\refeqn}[1]{(\ref{eq:#1})}
\renewcommand{\Re}{\ensuremath{\mathbb{R}}}
\newcommand{\w}{{\omega}}
\newcommand{\W}{{\Omega}}
\newcommand{\U}{\mathcal{U}}
\newcommand{\V}{\mathcal{V}}
\title{\LARGE \bf
Angular Velocity Observer on the Special Orthogonal Group for Velocity-Free Rigid-Body Attitude Tracking Control}
\author{Tse-Huai Wu and Taeyoung Lee\authorrefmark{1}%
\thanks{Tse-Huai Wu and Taeyoung Lee, Mechanical and Aerospace Engineering, The George Washington University, Washington DC 20052. {\tt \{wu52,tylee\}@gwu.edu}}%
\thanks{This research has been supported in part by NSF under the grant CMMI-1243000 (transferred from 1029551), CMMI-1335008, and CNS-1337722.}
}
\newtheorem{prop}{Proposition}
\begin{document}
\allowdisplaybreaks
\maketitle \thispagestyle{empty} \pagestyle{empty}

\begin{abstract}
This paper studies a rigid body attitude tracking control problem with attitude measurements only, when angular velocity measurements are not available. An angular velocity observer is constructed such that the estimated angular velocity is guaranteed to converge to the true angular velocity asymptotically from almost all initial estimates. As it is developed directly on the special orthogonal group, it completely avoids singularities, complexities, or discontinuities caused by minimal attitude representations or quaternions. Then, the presented observer is integrated with a proportional-derivative attitude tracking controller to show a separation type property, where exponential stability is guaranteed for the combined observer and attitude control system.
\end{abstract}

\section{Introduction}

The problem of attitude control of a rigid body is one of the most popular research topics in control theory and practice. The corresponding applications include aerial and underwater vehicles, robotics, and spacecraft dynamics. Many approaches have been studied in the attitude control problem to address various technical challenges~\cite{Tayebi08,ChaSanICSM11,LeeCST13,IEPC2013}. In most of the attitude control strategies, full states measurements, i.e., both attitude and angular velocity measurements, are required. However, angular velocity measurements are not available in certain cases, for example, due to limited sensing, power availability, and costs. 

Several approaches have been proposed for attitude controls without angular velocity measurements, where the value of the angular velocity is estimated. A nonlinear angular velocity observer is presented by Salcudean in~\cite{Sal1991}, to construct an estimated angular velocity in terms of the attitude measurements based on an observer designed for a second-order linear system. However, the observer is designed and analyzed separately from attitude control systems, assuming that there is a separation principle-like property, i.e., it is assumed that the convergence of the controller is independent of the observer design. Recently, a switching-type angular velocity observer is presented to show stability of an attitude control system in terms of quaternions~\cite{Chu2014}. 

There are other attitude control techniques that do not require an estimate of the angular velocity. An auxiliary system approach is proposed based on the passivity property~\cite{Liz1996,Tsiotras98}, where the auxiliary system generates a damping term similar to a derivative term that is directly dependent on the angular velocity~\cite{Tayebi08}.  Additionally, a hybrid attitude tracking controller is proposed in the absence of angular velocity information~\cite{Sch2012}, and a velocity-free adaptive controller is developed for rigid-body attitude tracking~\cite{Costic2000}.

Most of these prior works on angular velocity observers and velocity-free attitude controls are constructed in terms of local parameterizations of the attitudes, or quaternions. Attitude control systems based on minimal representations, such as Euler angles or modified Rodrigues parameters, suffer from singularities in representing large angle rotational maneuvers. Quaternions do not have singularities. However, since the configuration space of quaternions, represented by three-sphere double-covers the attitude configuration space of the special orthogonal group, one physical attitude actually corresponds to two antipodal quaternions. This ambiguity should be carefully resolved for any quaternion-based attitude control system to avoid undesirable phenomena such as unwinding, where a rigid body unnecessarily rotates through a large angle, even if the initial attitude error is small, or it may become sensitive to small measurement noise~\cite{BhaBerSCL00,MaySanITAC11}.

This paper follows the first type of approaches that are based on an estimated value of the angular velocity. An angular velocity observer is constructed directly on the special orthogonal group, and it is shown that the zero equilibrium of the estimation errors are almost globally asymptotically stabile, i.e., it is asymptotically stable and the region of attraction only excludes a set of zero Lebesgue measure~\cite{ChaSanICSM11}. The second part of this paper is devoted to a separation type property by integrating the proposed angular velocity observer with a separately designed attitude tracking control system. It is shown that the combined system yields exponential stability.

Compared with the prior work~\cite{Sal1991,Chu2014}, the angular velocity observer presented in this paper completely avoids the aforementioned issues of quaternions. Furthermore, in the switching-based angular velocity observer~\cite{Chu2014}, the observer performance depends on the mass distribution of the rigid body, since the convergence rate becomes slower and the number of switching increases as the rigid body becomes more elongated. Frequent switching may cause undesired behaviors or even instability as illustrated by numerical examples presented later in this paper. The main contribution of this paper can be summarized as (i) developing an angular velocity observer on the special orthogonal group to avoid the issues of quaternions and the dependency of convergence rates on the shape of a rigid body, and (ii) showing a separation property mathematically rigorously and explicitly without need for discontinuities caused by switching. To author's best knowledge, a separation-type property of attitude controls and angular velocity estimation has not been studied before without a switching logic.


The paper is organized as follows. A rigid-body dynamic model is introduced at Section II. An angular velocity observer is presented at Section \ref{sec:obs}, and a separation-type property is shown at Section IV, followed by numerical examples at Section \ref{sec:num}.

\section{Rigid Body Attitude Dynamics}\label{sec:Dynamics}
Consider the attitude dynamics of a fully-actuated rigid body. Two coordinate frames are defined: an inertial reference frame and a body-fixed frame. The attitude of the rigid body is denoted by $R\in\SO$ that represents the transformation of a representation of a vector from the body-fixed frame to the inertial reference frame. The configuration manifold of attitude is the special orthogonal group:
	\begin{align*} 
	\SO=\{R\in\Re^{3\t3}\,|\,{R^\T}R=I,\,\det[R]=1\}.
	\end{align*}
Let $\w\in\Re^3$ and $\W\in\Re^3$ denote the angular velocity of the rigid body with respect to the inertial reference frame and the body-fixed frame, respectively.	
The governing equations for the rigid body attitude dynamics are given by
	\begin{gather} 
	\frac{d}{dt}(J\w)=\tau,~~ J=RJ_0R^\T, \label{eq:eom1}\\
	\dot{R}=\hat{\w}R=R\hat{\W}, \label{eq:eom2}
	\end{gather} 
where $J_0\in\Re^{3\t3}$ is the fixed inertia matrix expressed in body-fixed frame and $\tau$ is the control moment expressed in the inertial reference frame. Note that the equation of motion for the angular velocity, \refeqn{eom1} is represented with respect to the inertial frame. In addition, the \textit{hat} map $\wedge:\Re^3\rightarrow\so$ transforms a vector in $\Re^3$ to a $3\t3$ skew-symmetric matrix such that $\hat{x}y=x^\wedge y=x\t y$ for any $x,\,y\in\Re^3$. And the inverse of hat map is denoted by the \textit{vee} map $\vee:\so\rightarrow\Re^3$. Several properties of hat map are listed as follows:
	\begin{gather} 
	\tr[A\hat{x}]=\tr[\hat{x}A]=-x^\T(A-A^\T), \label{eq:hat1}\\
	R\hat{x}R^\T =(Rx)^\wedge, \label{eq:hat2}\\
	\hat{x}A+A^\T\hat{x}=(\{\tr[A]\I-A\}x)^\wedge \label{eq:hat3},
	\end{gather}
for any $x\in\Re^3,\,A\in\Re^{3\t3},\,R\in\SO$.  The standard inner product of two vectors is denoted by $x\cdot y=x^\T y$. Throughout this paper, $\I$ denotes the $3\t3$ identity matrix and the 2-norm of matrix $A$ is denoted by $\|A\|$. Also, $\lam_M$ and $\lam_m$ are defined as the maximum eigenvalue and the minimum eigenvalue of the inertia matrix $J_0$, respectively.

\section{Angular Velocity Observer on $\SO$} \label{sec:obs}

In this section, an observer is constructed such that the angular velocity is estimated when the attitude measurements and the control input are available.

\subsection{Estimate Frame}

Define an orthonormal frame estimated by the observer. The attitude and angular velocity of the estimate frame with respect to the inertial reference frame are denoted by $\bar{R}\in\SO$ and $\bar\w\in\Re^3$, respectively. More explicitly, $\bar R$ denotes the linear transformation from the inertial reference frame to the estimate frame. The discrepancy between the true attitude $R$ and the estimated attitude $\bar{R}$ is denoted by a rotation matrix $Q_E\in\SO$, where    
	\begin{align} 
	Q_E=R\bar{R}^\T.
	\end{align}
Note that $Q_E=\I$ when $R=\bar{R}$. 

To further describe the error dynamics between $R$ and $\bar{R}$, the estimate error variables are defined as follows:
	\begin{gather} 
	\Psi_E=\frac{1}{2}\tr[G_E(\I-Q_E)], \label{eq:ev1}\\
	e_{R_E} =\frac{1}{2}(Q_EG_E-G_EQ_E^\T)^\vee, \label{eq:ev2}\\
	e_{\w_E} =J\w-J\bar\w, \label{eq:ev3}
	\end{gather}
where $\Psi_E\in\Re$, $e_{R_E}\in\Re^3$ and $e_{\w_E}\in\Re^3$ denote the estimate error function, attitude estimate error vector and estimate angular velocity error, respectively. The matrix $G_E$ is defined as $G_E=\diag[\ep_1,\ep_2,\ep_3]\in\Re^{3\t3}$ where $\ep_1,\ep_2,\ep_3\in\Re$ are distinct positive constants.

\subsection{Observer Design}
The observer dynamics are defined as
	\begin{gather} 
	\frac{d}{dt}(J\bar\w)=\tau+\frac{1}{2}k_EJ^{-1}e_{R_E}, \label{eq:ob1}\\
	\dot{\bar R}=\big[Q_E^\T(\bar\w+k_vJ^{-1}e_{R_E})\big]^\wedge\,\bar{R}, \label{eq:ob2}
	\end{gather}
where $k_E,\,k_v\in\Re$ are positive constants. The observer is designed in the inertial reference, and it can be transformed to the body-fixed frame easily since the attitude is assumed to be available.	

The estimate error variables along the solution of the above observer dynamics satisfy the following properties.
\begin{prop} \label{prop:erdyn}
The estimate error variables $Q_E$, $\Psi_E$, $e_{R_E}$, and $e_{\w_E}$ satisfy:
	\begin{enumerate}[(i)]
		\item $\Psi_E$ is positive definite about $R=\bar{R}$.
		\item Let the positive constants $n_1,\ldots,n_5$ be
				\begin{align} 
		n_1 &= \mathrm{min}\{\ep_1+\ep_2,\, \ep_2+\ep_3,\, \ep_3+\ep_1\}, \label{eq:n1}\\
		n_2 &= \mathrm{max}\{(\ep_1-\ep_2)^2,\, (\ep_2-\ep_3)^2,\, (\ep_3-\ep_1)^2\}, \nonumber\\
		n_3 &= \mathrm{max}\{(\ep_1+\ep_2)^2,\,(\ep_2+\ep_3)^2,\,(\ep_3+\ep_1)^2\}, \nonumber\\
		n_4 &= \mathrm{max}\{\ep_1+\ep_2,\, \ep_2+\ep_3,\, \ep_3+\ep_1\}, \nonumber\\
		n_5 &= \mathrm{min}\{(\ep_1+\ep_2)^2,\,(\ep_3+\ep_3)^2,\,(\ep_3+\ep_1)^2\},\nonumber
		\end{align}		
		and let $\psi_E<n_1$. The error function $\Psi_E$ is locally quadratic, i.e.,
			\begin{align} 
			\hspace*{-0.1cm}\frac{n_1}{n_2+n_3}\|e_{R_E}\|^2 \leq\Psi_E\leq \frac{n_1n_4}{n_5(n_1-\psi_E)}\|e_{R_E}\|^2, \label{eq:PsiE}
			\end{align}
		where the upper bound is satisfied when $\Psi_E<\psi_E$.
		\item $\dot{Q}_E=\hat{\w}_E Q_E$,
		\item $\dot{\Psi}_E=\w_E^\T e_{R_E}$,
		\item $\dot{e}_{R_E}=E_o(R,\bar{R})\w_E$,
		\item $\dot{e}_{\w_E}=-\frac{1}{2}k_EJ^{-1}e_{R_E}$,
	\end{enumerate}	
where $\w_E\in\Re^3$ and $E_o(R,\bar{R})\in\Re^{3\t3}$ are given by
	\begin{gather} 
	\w_E=\w-\bar{\w}-k_vJ^{-1}e_{R_E},  \label{eq:wE}\\
	E_o(R,\bar{R})=\frac{1}{2}(\tr[Q_E]\I-2\hat{e}_{R_E}-G_EQ_E^\T).
	\end{gather}
\end{prop}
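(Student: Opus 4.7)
My plan is to dispatch parts (iii)--(vi) by direct differentiation using the hat-map identities \refeqn{hat1}--\refeqn{hat3} together with the kinematics \refeqn{eom1}--\refeqn{eom2} and the observer equations \refeqn{ob1}--\refeqn{ob2}. Parts (i) and (ii), in contrast, are static properties of $\Psi_E$ on $\SO$ that do not use the observer dynamics and would be handled separately.

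I would first dispose of the easy dynamical identities. Part (vi) is immediate: subtracting \refeqn{ob1} from \refeqn{eom1} cancels the control torque $\tau$ and leaves the claimed expression. For part (iii), differentiate $Q_E = R\bar R^\T$ and substitute \refeqn{eom2} together with \refeqn{ob2} to obtain $\dot Q_E = \hat\w\, Q_E - Q_E \bigl[Q_E^\T(\bar\w + k_v J^{-1} e_{R_E})\bigr]^\wedge$. The key step is to apply \refeqn{hat2} in the form $Q_E(Q_E^\T a)^\wedge = \hat a\, Q_E$, which collapses the second term; the definition \refeqn{wE} of $\w_E$ then yields $\dot Q_E = \hat{\w}_E Q_E$.

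Parts (iv) and (v) then reduce to the hat-map trace identities. Differentiating \refeqn{ev1} with $\dot Q_E = \hat{\w}_E Q_E$ and applying \refeqn{hat1} with $A = Q_E G_E$ produces $\dot \Psi_E = \tfrac12 \w_E^\T(Q_E G_E - G_E Q_E^\T)^\vee = \w_E^\T e_{R_E}$. For (v), differentiating \refeqn{ev2} and using $\dot Q_E^\T = -Q_E^\T \hat{\w}_E$ yields $\dot e_{R_E} = \tfrac12(\hat{\w}_E Q_E G_E + G_E Q_E^\T \hat{\w}_E)^\vee$; invoking \refeqn{hat3} with the same $A = Q_E G_E$ produces an expression of the form $\tfrac12(\tr[Q_E G_E]\I - Q_E G_E)\w_E$, which is rewritten to match the stated $E_o(R,\bar R)\w_E$ via the identity $Q_E G_E = G_E Q_E^\T + 2\hat e_{R_E}$ that follows directly from \refeqn{ev2}.

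For the static parts, (i) follows because $G_E$ is positive-definite and diagonal, so $\tr[G_E(\I - Q_E)] \geq 0$ on $\SO$ with equality only at $Q_E = \I$. Part (ii) requires more care: parameterizing $Q_E$ about the identity by its principal-axis vector expresses both $\Psi_E$ and $\|e_{R_E}\|^2$ as explicit quadratic forms whose coefficients are the $\ep_i$ and their pairwise sums/differences, and the constants $n_1,\ldots,n_5$ emerge as the optimal Rayleigh-quotient bounds on these forms (the upper bound being restricted to the sublevel set $\Psi_E < \psi_E$ to keep the principal angle bounded away from $\pi$). Bounds of exactly this flavor are standard in geometric attitude control, and I would adapt the derivations in the authors' earlier tracking-controller work on $\SO$. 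The eigenvalue bookkeeping for (ii) and the final algebraic matching in (v) are the only genuinely nontrivial steps; the remainder is direct substitution.
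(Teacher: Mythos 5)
Your proposal matches the paper's proof essentially step for step: parts (iii)--(vi) by direct differentiation using the hat-map identities \refeqn{hat1}--\refeqn{hat3} (including the final rewriting via $Q_EG_E = G_EQ_E^\T + 2\hat{e}_{R_E}$ in (v)), part (i) from positivity of the diagonal weights, and part (ii) by invoking the standard quadratic bounds on trace-type error functions from the prior geometric attitude-control literature, which is exactly what the paper does by citing that result and substituting $F=G_E$, $P=Q_E$. No gaps; incidentally, your computation for (v) correctly produces $\tr[Q_EG_E]$ where the paper's stated $E_o$ writes $\tr[Q_E]$, which appears to be a typo in the paper.
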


\begin{proof} 
It is known that $-1\leq\tr[R]\leq3$, for any rotation matrix $R\in\SO$, then it is clear that $\Psi_E\geq0$ and $\Psi_E=0$ only happens at $Q_E=\I$, which verifies (i). To show (ii), the following properties in~\cite{Fernando11} are applied: For non-negative constants $f_1,f_2,f_3$, let $F=\text{diag}[f_1,f_2,f_3]\in\Re^{3\times 3}$, and let $P\in\SO$. Define
	\begin{gather} 
	\Phi=\frac{1}{2}\tr[F(\I-P)], \label{eq:Phi}\\
	e_P=\frac{1}{2}(FP-P^{\T}F)^\vee.\label{eq:eP}
	\end{gather}
Then, $\Phi$ is bounded by the square of the norm of $e_P$ as
	\begin{gather}
	\frac{h_1}{h_2+h_3}\|e_P\|^2\leq \Phi \leq\frac{h_1h_4}{h_5(h_1-\phi)}\|e_P\|^2. \label{eq:PHI}
	\end{gather}
If $\Phi<\phi<h_1$ for a constant $\phi$, where $h_i$ are given by
	\begin{align*} 
	h_1 &= \mathrm{min}\{f_1+f_2,\, f_2+f_3,\, f_3+f_1\}, \\
	h_2 &= \mathrm{max}\{(f_1-f_2)^2,\, (f_2-f_3)^2,\, (f_3-f_1)^2\}, \\
	h_3 &= \mathrm{max}\{(f_1+f_2)^2,\, (f_2+f_3)^2,\, (f_3+f_1)^2\}, \\
	h_4 &= \mathrm{max}\{f_1+f_2,\, f_2+f_3,\, f_3+f_1\}, \\
	h_5 &= \mathrm{min}\{(f_1+f_2)^2,\,(f_2+f_3)^2,\,(f_3+f_1)^2\}.
	\end{align*}
Now, we choose $F=G_E$ and $P=Q_E$, we then have $\Phi=\Psi_E$, $e_P=e_{R_E}$, $\phi=\psi_E$ and $h_i=n_i$, for $i=\{1,2,\ldots,5\}$. This shows (ii).
 
From \refeqn{eom2} and \refeqn{ob2}, the time-derivative of $Q_E$ is 
	\begin{align*} 	
	\dot{Q}_E 
	&=\hat{\w}R\bar{R}^\T-R\bar{R}^\T\big[Q_E^\T(\bar{\w}+k_vJ^{-1}e_{R_E})\big]^\wedge.
	\end{align*}
Using \refeqn{hat2} and \refeqn{wE}, it is rearranged as
	\begin{align*}
	\dot{Q}_E 	&= \hat{\w}Q_E-Q_EQ_E^\T(\bar{\w}+k_vJ^{-1}e_{R_E})^\wedge Q_E \\ &=(\w-\bar{\w}-k_vJ^{-1}e_{R_E})^\wedge Q_E = \hat{\w}_EQ_E, 
	\end{align*}
which shows (iii). 

Next, the time-derivative of $\Psi_E$ is 
	\begin{align*} 
	\dot{\Psi}_E
	&= -\frac{1}{2}\tr[\dot{Q}_EG_E] \\
	&=-\frac{1}{2}\tr\big[\big(\w-\bar{\w}-k_vJ^{-1}e_{R_E}\big)^\wedge \,Q_EG_E\big].
	\end{align*}
From \refeqn{hat1}, it is rewritten as
	\begin{align*}
	\dot{\Psi}_E
	&= \frac{1}{2}(\w-\bar{\w}-k_vJ^{-1}e_{R_E})^\T(Q_EG_E-G_EQ_E^\T)^\vee \\
	&=(\w-\bar{\w}-k_vJ^{-1}e_{R_E})^\T e_{R_E} \triangleq \w_E^\T e_{R_E},
	\end{align*}
which shows (iv). Next, according to \refeqn{hat3} and \refeqn{ev2}, the time-derivative of $e_{R_E}$ is 
	\begin{align*} 
	\dot{e}_{R_E} 
	&= \frac{1}{2}\big(\hat{\w}_EQ_EG_E +G_EQ_E^\T\hat{\w}_E\big)^\vee \\
	&= \frac{1}{2}(\tr[Q_EG_E]\I-Q_EG_E)\w_E \\
	&= \frac{1}{2}(\tr[Q_E]\I-2\hat{e}_{R_E}-G_EQ_E^\T)\w_E, \no
	\end{align*}
which shows (v). Finally, from \refeqn{eom1}, \refeqn{ev3} and \refeqn{ob1}, we have  
	\begin{align*} 
	\dot{e}_{\w_E} =\frac{d}{dt}(J\w)-\frac{d}{dt}(J\bar\w)=\tau-(\tau+\frac{1}{2}k_EJ^{-1}e_{R_E}),
	\end{align*}
which shows (vi).
\end{proof}	

Next, we show that the zero equilibrium of the estimate error variables is almost globally asymptotically stable.
\begin{prop} \label{prop:observer}
Consider the system given by \refeqn{eom1}, \refeqn{eom2} with the angular velocity observer given by \refeqn{ob1}, \refeqn{ob2}. The following properties holds:
	\begin{enumerate}[(i)]
		\item There are four equilibrium configurations, given by
			\begin{align} 
			(R,\,\w)\in\{(\bar{R},\bar{\w}),\,(D_i\bar{R},\,\bar{\w})\},
			\end{align}					
		for $i=1,2,3$, where $D_1=\diag[1,-1,-1]$,  $D_2=\diag[-1,1,-1]$ and  $D_3=\diag[-1,-1,1]$.
		\item The desired equilibrium $(R,\w)=(\bar{R},\bar{\w})$ is almost globally asymptotically stable.
		\item The remaining three undesired equilibrium configurations are unstable.
	\end{enumerate}	
\end{prop}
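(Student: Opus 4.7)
My plan is to build a single Lyapunov function that simultaneously (a) certifies local asymptotic stability of $(\bar R,\bar\w)$ and (b) is nonincreasing on the whole phase space, then apply LaSalle's invariance principle to force every trajectory onto one of the four equilibria of (i), and finally show that the three spurious equilibria are saddles whose stable manifolds have Lebesgue measure zero.

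For part (i), I plan to set the right-hand sides of (iii)--(vi) of \refprop{erdyn} to zero. Part (vi) forces $e_{R_E}=0$, i.e. $Q_E G_E = G_E Q_E^\T$. Reading off the $(i,j)$ entries and using that $\ep_1,\ep_2,\ep_3$ are distinct and positive, together with $Q_E\in\SO$, a short calculation yields $Q_E\in\{\I,D_1,D_2,D_3\}$. At such $Q_E$, part (iii) gives $\hat\w_E Q_E = 0$, so $\w_E=0$, and then \refeqn{wE} with $e_{R_E}=0$ gives $\w=\bar\w$.

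For part (ii), I use the Lyapunov candidate
\begin{align*}
V = \Psi_E + \frac{1}{k_E}\|e_{\w_E}\|^2.
\end{align*}
Combining (iv) and (vi) of \refprop{erdyn} with the identity $J\w_E = e_{\w_E}-k_v e_{R_E}$ (which follows from $e_{\w_E}=J(\w-\bar\w)$ and \refeqn{wE}) gives a clean cancellation,
\begin{align*}
\dot V = \w_E^\T e_{R_E} + \tfrac{2}{k_E}e_{\w_E}^\T\dot e_{\w_E} = -k_v\, e_{R_E}^\T J^{-1} e_{R_E} \le -\tfrac{k_v}{\lam_M}\|e_{R_E}\|^2 \le 0,
\end{align*}
so sublevel sets of $V$ are positively invariant. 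On any sublevel set $\{V<c\}$ with $c<n_1$, the quadratic bound in (ii) of \refprop{erdyn} makes $V$ locally quadratic in $(e_{R_E},e_{\w_E})$; since the spurious equilibria satisfy $\Psi_E\in\{\ep_2+\ep_3,\ep_1+\ep_3,\ep_1+\ep_2\}\ge n_1$, the only equilibrium inside is $(\bar R,\bar\w)$, which yields local asymptotic stability. To globalize, I apply LaSalle: on $\{\dot V=0\}$ we have $e_{R_E}\equiv 0$, hence by (vi) $e_{\w_E}$ is constant, by (iii) $Q_E$ is a constant element of $\{\I,D_1,D_2,D_3\}$ so $\hat\w_E Q_E=0$ gives $\w_E=0$, and then $\w_E=0$ with $e_{R_E}=0$ forces $e_{\w_E}=0$. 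Every solution thus converges to one of the four equilibria.

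For part (iii) and the almost-global claim, I plan to linearize the error dynamics about each $(D_i\bar R,\bar\w)$ in local exponential coordinates $Q_E = D_i\exp\hat\xi$, $e_{\w_E}\in\Re^3$. The stiffness block coming from \refprop{erdyn}(v) evaluates at $Q_E=D_i$ to $\tfrac12(\tr[D_i]\I - D_i)G_E D_i^\T$; since $\tr[D_i]=-1$ and $D_i$ has two negative diagonal entries, this matrix is indefinite, so the linearization has at least one eigenvalue in the open right half-plane. Hence each $(D_i\bar R,\bar\w)$ is a hyperbolic saddle, its local stable manifold has positive codimension by the stable manifold theorem, and the union of the three global stable manifolds has Lebesgue measure zero. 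Combined with the LaSalle conclusion, every initial condition outside this zero-measure set converges to $(\bar R,\bar\w)$, establishing (ii) and (iii). I expect the main obstacle to be exactly this last step: one must verify hyperbolicity (no purely imaginary eigenvalues) of the linearization at each $D_i$ before invoking the stable manifold theorem on $\SO\times\Re^3$, and then argue that the global saturation of these local stable manifolds remains measure zero, which is the only nontrivial topological ingredient in the argument.
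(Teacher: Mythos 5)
Your part (i) and your Lyapunov computation are essentially the paper's: the paper uses $\U=\|e_{\w_E}\|^2+k_E\Psi_E$, which is $k_E$ times your $V$, and obtains the same $\dot{\U}=-k_Ek_v\,e_{R_E}^\T J^{-1}e_{R_E}$. The divergence, and the gap, is in how you pass from $\dot V\le 0$ to convergence and to the measure-zero claim. Your LaSalle step is not justified as stated: the error dynamics in $(Q_E,e_{\w_E})$ are \emph{not autonomous}, because $J=RJ_0R^\T$ (hence $J^{-1}$ in $\w_E=J^{-1}(e_{\w_E}-k_ve_{R_E})$ and in $\dot e_{\w_E}$) depends on the trajectory $R(t)$, which in turn is driven by the unspecified external input $\tau(t)$. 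The classical invariance principle requires an autonomous flow; this is precisely why the paper argues via Barbalat's lemma instead (integrate $\dot{\U}\le -\frac{k_Ek_v}{\lam_M}\|e_{R_E}\|^2$ to get $e_{R_E}\to 0$, then a second Barbalat application to $\dot Q_E$ to get $\w-\bar\w\to 0$, hence $e_{\w_E}\to 0$). Your conclusion is correct but needs that tool (or a non-autonomous invariance principle), not LaSalle.

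The same non-autonomy undercuts your plan for part (iii): the linearization of the error dynamics about $(D_i\bar R,\bar\w)$ is a \emph{time-varying} linear system (again through $J(t)=R(t)J_0R(t)^\T$), so "eigenvalues of the linearization" do not determine stability and the standard stable manifold theorem does not apply without establishing an exponential dichotomy; you flag hyperbolicity as the remaining obstacle, but the obstacle is deeper, and in any case this step is left unexecuted (your stated stiffness block $\frac12(\tr[D_i]\I-D_i)G_ED_i^\T$ also does not match $E_o$ evaluated at $Q_E=D_i$, which is $\frac12(-\I-G_ED_i)$). The paper instead proves (iii) by a Chetaev-type argument: with $\mathcal W=k_E(\ep_j+\ep_k)-\U$, one has $\mathcal W=0$ at the undesired equilibrium, $\mathcal W>0$ on part of every neighborhood, and $\dot{\mathcal W}=-\dot{\U}\ge 0$, which needs only the uniform bounds on $J^{-1}$ and so survives the time dependence. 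To be fair, your stable-manifold route is in principle the \emph{right} way to make "almost global" rigorous (Chetaev instability alone does not show the attracted set has measure zero, so the paper's own final step is also informal), but as written your proposal neither completes that route nor supplies a tool that works for this non-autonomous error system.
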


\begin{proof} 
The equilibrium configurations happen at $(e_{R_E}, e_{\w_E})=(0,0)$. Clearly, in view of \refeqn{ev3}, $e_{\w_E}=0$ yields $\w=\bar\w$. From \refeqn{ev2}, $e_{R_E}=0$ directly implies that $Q_EG_E-G_EQ_E^\T=0$, which follows that either $Q_E=\I$ or $\tr[Q_E]=-1$~\cite[Theorem 5.1]{Mahony08}. Therefore,
	\begin{align*} 
	Q_E=R\bar{R}^\T 
	\in&\{I_{3\times 3},\, D_1,\, D_2,\, D_3\},
	\end{align*}
which shows (i). 

Consider the following Lyapunov function,
	\begin{align} 
	\U=e_{\w_E}^\T e_{\w_E}+k_E\Psi_E, \label{eq:U}
	\end{align}
which is positive definite about $(e_{\w_E},e_{R_E})=(0,0)$. From the properties (iv) and (vi) of Proposition \ref{prop:erdyn}, the time-derivative of $\U$ is given by 		
	\begin{align} 
	\dot{\U}
	&=2e_{\w_E}^\T\dot{e}_{\w_E} +k_E\dot{\Psi}_E \no\\
	&=2e_{\w_E}^\T(-\frac{1}{2}k_EJ^{-1}e_{R_E})+(\w-\bar{\w}-k_vJ^{-1}e_{R_E})^\T e_{R_E} \no\\
	&=-k_Ek_ve_{R_E}^\T (J^{-1}e_{R_E}) \leq -k_Ek_v\frac{1}{\lam_M}\|e_{R_E}\|^2. \label{eq:Udot}
	\end{align}
Hence, one can conclude that $e_{\w_E}$, $e_{R_E}$ are globally bounded and $\lim_{t\rightarrow\infty}\|e_{R_E}\|=0$. Further, one can show that $\|\ddot{Q}_E\|$ is bounded and $\lim_{t\rightarrow\infty}\int^t_0\|\dot{Q}_E\|\,dt'=\lim_{t\rightarrow\infty}\|Q_E\|$ exits. From Barbalat Lemma, we conclude that
	\begin{gather} 
	\lim_{t\rightarrow\infty}\|\dot{Q}_E\|=\lim_{t\rightarrow\infty}\|\w-\bar{\w}-k_vJ^{-1}e_{R_E}\|=0.
	\end{gather}		
Since $\lim_{t\rightarrow\infty}\|e_{R_E}\|=0$, it is clear that $\lim_{t\rightarrow\infty}\|\w-\bar\w\|=0$, and this implies $\lim_{t\rightarrow\infty}\|e_{\w_E}\|=0$. Consequently, the equilibrium $(e_{R_E}, e_{\w_E})=(0,0)$ is asymptotically stable. 

However, the fact that $\lim_{t\rightarrow\infty}e_{R_E}=0$ does not necessarily imply that the estimated attitude asymptotically converges to the true attitude. Instead, it asymptotically converges to either the true attitude or one of the three undesired equilibria given by $RD_i$ for $1\leq i\leq 3$. 

Next, we show the undesired equilibria are unstable.  At the first undesired equilibrium $\bar R=RD_1$, we have $\Psi_E=\epsilon_2+\epsilon_3$. Define $\mathcal{W}=k_E(\epsilon_2+\epsilon_3)-\mathcal{U}$, or
\begin{align*}
\mathcal{W} = k_E (\epsilon_2+\epsilon_3-\Psi_E) -\|e_{\omega_E}\|^2.
\end{align*}
Then, $\mathcal{W}=0$ at the undesired equilibrium. Due to the continuity of $\Psi_E$, in an arbitrarily small neighborhood of $RD_1$ in $\SO$, there exists $\bar R$ such that $(\epsilon_2+\epsilon_3)-\Psi_E > 0 $. For such attitudes, we can guarantee that $\mathcal{W} > 0$ if $\|e_{\omega_E}\|$ is sufficiently small. In other words, at any arbitrarily small neighborhood of the undesired equilibrium, there exists a domain, namely $U$ such that $\mathcal{W} >0$ in $U$. And we have $\dot{\mathcal{W}}= -\dot{\mathcal{U}} > 0$ from \refeqn{Udot}. According to Theorem 3.3 at~\cite{Kha96}, the undesired equilibrium is unstable. The instability of the other two equilibrium configurations can be shown by the similar way. This shows the almost global asymptotic stability of (ii) as well as (iii).
%
%
%
%
%
\end{proof} 

The presented angular velocity observer guarantees that the estimation errors asymptotically converge to zero for almost all initial estimates, i.e., the region of attraction excludes only a \textit{thin} set of zero measure. This is the strongest stability property for any continuous angular velocity observer, due to the topological restriction stating that it is impossible to achieve global attractivity in the special orthogonal group unless discontinuities are introduced. 

In contrast to the prior work given by~\cite{Chu2014} where the the ratio of $\frac{\lambda_M}{\lambda_m}$ has a crucial impact on the observer performance, the convergence property of the proposed angular velocity observer is independent of the mass distribution of the rigid body.

\section{Attitude Tracking without Angular Velocity Measurements}

In this section, we show a separation property of the angular velocity observer developed in this previous section with a proportional-derivative attitude tracking control system on $\SO$.

\subsection{Attitude Tracking Controls}

We first review a attitude tracking controller developed on $\SO$~\cite[Sec. 11.4.3]{BulLew05} and~\cite{LeeCST13}. 
%
Suppose the desired attitude $R_d(t)\in\SO$ and the desired angular velocity $\W_d(t)\in\Re^3$ are given as smooth functions of time, and they satisfy the kinematic equation $\dot{R}_d=R_d\hat{\W}_d$. Let $Q\in\SO$ be the relative attitude of the desired attitude with respect to the current attitude of the rigid body, i.e.,
	\begin{align*} 
	Q=R^\T R_d\in\SO,
	\end{align*}	
The attitude tracking error variables are defined as 
		\begin{gather} 
		\Psi=\frac{1}{2}\tr[G(\I-Q)],\label{eq:Psi}\\
		e_R=\frac{1}{2}(GQ^\T-QG)^\vee,\\
		 e_\W=\W-Q\W_d,
		\end{gather}
where $\Psi\in\Re$ is the tracking attitude error function; $e_R,\,e_\W\in\Re^3$ are the tracking attitude error vector and tracking angular velocity error, respectively. The matrix $G=\diag[g_1,g_2,g_3]\in\Re^{3\t3}$ where $g_1,g_2,g_3\in\Re$ are distinct positive constants. 

The corresponding error dynamics are given as
	\begin{gather} 
	\dot{\Psi}=e_R^\T e_\W, \label{eq:ed1}\\
	\dot{e}_R=E_c(Q)e_\W, \label{eq:ed2}\\
	J_0\dot{e}_\W=u+\hat{\chi}e_\W-JQ\dot{\W}_d-\widehat{Q\W_d}J_0Q\W_d, \label{eq:ed3}
	\end{gather}
where $E_c(Q)\in\Re^{3\times 3}$ and $\chi\in\Re^3$ are defined as
	\begin{gather} 
	E_c(Q)=\frac{1}{2}(\tr[QG]\I-QG), \\
	\chi=J_0e_\W+(2J_0-\tr[J_0]\I)Q\W_d,\label{eq:chi}
	\end{gather}	
and $u\in\Re^3$ is the control moment expressed in the body-fixed frame, i.e., $u=R^\T\tau$. Detailed analysis of the error variables has been addressed in~\cite{LeeCST13,IEPC2013,BulLew05}.


A proportional-derivative (PD) type controllers on $\SO$ is introduced as below.
\begin{prop}{(\cite{LeeCST13,IEPC2013,BulLew05})}
Consider the attitude dynamics given by \refeqn{eom1}, \refeqn{eom2}. For positive constants $k_R,\,k_\W\in\Re$, let the control input be
	\begin{align} 
	u=-k_Re_R-k_\W{e_\W} +J_0Q\dot\W_d +\widehat{Q\W_d}(J_0Q\W_d). \label{eq:full}
	\end{align}
Then, the zero equilibrium of the tracking errors $(e_R,e_\Omega)$ is almost globally asymptotically stable.
\end{prop}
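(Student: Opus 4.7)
The plan is to close the loop, build a Lyapunov function of kinetic-plus-potential form, reduce to a purely dissipative inequality, and then replicate the Chetaev-style argument used in the proof of \refprop{observer} to dispatch the three undesired equilibria.

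First I would substitute the control law \refeqn{full} directly into the error dynamics \refeqn{ed3}. By construction, the feedforward terms $J_0Q\dot\W_d$ and $\widehat{Q\W_d}(J_0Q\W_d)$ cancel the corresponding drift terms in \refeqn{ed3}, leaving the closed-loop form
\begin{align*}
J_0\dot e_\W = -k_R e_R - k_\W e_\W + \hat\chi e_\W.
\end{align*}
This is the clean form one needs for the Lyapunov argument.

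Next, I would take as Lyapunov candidate
\begin{align*}
\V = \tfrac{1}{2} e_\W^\T J_0 e_\W + k_R \Psi,
\end{align*}
which is positive definite about $(e_R,e_\W)=(0,0)$ (using the local quadratic bound on $\Psi$, analogous to the one established in \refprop{erdyn}(ii) for $\Psi_E$). Differentiating and using \refeqn{ed1} together with the skew-symmetry identity $e_\W^\T\hat\chi e_\W=0$, the cross term $k_R e_R^\T e_\W$ coming from $k_R\dot\Psi$ cancels the $-k_R e_R$ contribution from the closed loop, yielding
\begin{align*}
\dot\V = -k_\W\|e_\W\|^2 \leq 0.
\end{align*}
Hence $\V$ is bounded, which bounds $e_\W$ and (through the quadratic lower bound on $\Psi$) also $e_R$, and $\lim_{t\to\infty}\|e_\W\|=0$ follows by a standard Barbalat argument after verifying that $\ddot\V$ is bounded (this in turn uses that $\W_d,\dot\W_d$ are bounded by assumption and that $e_R$ is bounded). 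To promote this to $\lim_{t\to\infty}\|e_R\|=0$, I would apply Barbalat once more to $\dot e_\W$: the closed-loop equation above makes $\dot e_\W$ uniformly continuous once $e_\W,e_R$ are bounded, and $\int_0^\infty \|e_\W\|^2\,dt <\infty$; combining this with $e_\W\to 0$ forces $k_R e_R\to 0$ in the limit.

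Finally, the equilibrium set of the closed loop consists of $(Q,e_\W)$ with $e_\W=0$ and $e_R=0$, which by the same argument used in \refprop{observer}(i) (invoking $GQ-Q^\T G=0\Rightarrow Q=\I$ or $\tr[Q]=-1$) gives the desired equilibrium $Q=\I$ together with the three undesired ones $Q\in\{D_1,D_2,D_3\}$. For the desired one, local asymptotic stability follows from the Lyapunov analysis above (with $\Psi$ locally quadratic). For each undesired equilibrium, I would set up the Chetaev function
\begin{align*}
\mW = k_R(g_i+g_j) - \V,
\end{align*}
with $g_i,g_j$ the two diagonal entries of $G$ corresponding to the given $D_k$, exactly mirroring the $\mW$-construction at the end of the proof of \refprop{observer}: $\mW=0$ at the equilibrium, $\mW>0$ on a nonempty open set in every neighborhood of it, and $\dot\mW=-\dot\V=k_\W\|e_\W\|^2\geq 0$ with strict positivity on that set, giving instability by Chetaev's theorem. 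The almost global claim then follows since the complement of the basin of the desired equilibrium is contained in the stable manifolds of the three unstable equilibria, a set of zero measure.

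The step I expect to be most delicate is the Barbalat-based passage from $e_\W\to 0$ to $e_R\to 0$: because the system is non-autonomous through $R_d(t)$, a direct LaSalle invariant-set argument is not immediately available, so the uniform continuity of $\dot e_\W$ and boundedness of all signals must be established carefully before concluding $k_R e_R\to 0$.
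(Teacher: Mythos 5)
The paper does not actually prove this proposition---it is imported from \cite{LeeCST13,IEPC2013,BulLew05}---so the only internal benchmark is the structurally analogous proof of \refprop{observer}. Your sketch follows essentially that same route, which is also the route of the cited references: cancel the feedforward terms in \refeqn{ed3}, take $\V=\frac{1}{2}e_\W^\T J_0 e_\W+k_R\Psi$, use $e_\W^\T\hat\chi e_\W=0$ and $\dot\Psi=e_R^\T e_\W$ to get $\dot\V=-k_\W\|e_\W\|^2$, run Barbalat twice to get $e_\W\to 0$ and then $e_R\to 0$, classify the equilibria via $GQ^\T-QG=0$, and destabilize the three antipodal configurations with a Chetaev-type function. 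All of the computations you indicate are correct, and the concern you flag about uniform continuity of $\dot e_\W$ in the non-autonomous setting is exactly the right one.

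Two caveats, both of which your sketch shares with the paper's own proof of \refprop{observer}. (a) Your assertion that $\dot\mW=k_\W\|e_\W\|^2$ has ``strict positivity'' on the set where $\mW>0$ is not true: that set contains points with $e_\W=0$ and the attitude perturbed away from $D_k$, where $\dot\mW=0$, so Chetaev's theorem in the form requiring $\dot V>0$ throughout $U$ does not apply verbatim. The clean fix is monotonicity: $\mW$ is nondecreasing along trajectories, so a trajectory with $\mW(0)>0$ keeps $\V$ bounded away from its equilibrium value forever; since every trajectory converges to the four-point set $\{I,D_1,D_2,D_3\}$ (with $e_\W\to0$), it must converge to a different equilibrium and hence exits every small neighborhood of $D_k$, which is instability. (b) Chetaev-type instability by itself does not make the set of initial conditions converging to an undesired equilibrium have measure zero; the ``almost global'' conclusion requires showing the linearization at each $D_k$ has an eigenvalue with positive real part, so that the stable set is contained in a lower-dimensional invariant manifold. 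That hyperbolicity step is how the cited references actually close the argument, and it should be stated rather than inferred from Chetaev instability.
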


\subsection{Separation-Type Property} \label{sec:sep}

The above PD-type attitude tracking control system requires that the angular velocity of the rigid body is available always. Here we show that the angular velocity observer presented at Section \ref{sec:obs} satisfies a separation property when combined with the PD-type controller.  

Suppose that the true angular velocity $\Omega$ is not available to the control system, and the angular velocity estimated by the presented observer is applied instead. Define the estimated angular velocity tracking error as 
	\begin{align}  
	\bar{e}_\W=\bar{\W}-Q\W_d,\label{eq:eWbar}
	\end{align}	
where $\bar{\W}=R^\T\bar{\w}$ is the estimate angular velocity expressed in the body-fixed frame. The stability properties of the corresponding combined observer and controller are summarized as follows.

\begin{prop} \label{prop:sep}
Consider the attitude dynamics given by \refeqn{eom1}, \refeqn{eom2} with the angular velocity observer given by \refeqn{ob1}, \refeqn{ob2}. The control input is chosen as
	\begin{gather} 
	u=-k_Re_R-k_\W{\bar{e}_\W} +J_0Q\dot\W_d +\widehat{Q\W_d}(J_0Q\W_d), \label{eq:free}
	\end{gather}
for positive constants $k_R,\,k_\W\in\Re$. 
Assume that the inertia matrix $J_0$ of the rigid body and the weighting matrix $G_E$ satisfy 
			\begin{gather} 
	        \frac{\lam_M}{\lam_m}<\frac{\tr[G_E]}{\|G_E\|}. \label{eq:ratio}
			\end{gather}		
Let $\bar\psi_E$ be a positive constant satisfying $\bar\psi_E < \min\{n_1,\frac{1}{2}(\tr[G_E]-\frac{\lam_M}{\lam_m}\|G_E\|)\}$. Also, assume that the initial conditions satisfy
			\begin{gather} 
			{\Psi}_E(0) <\bar\psi_E < \min\{n_1,\frac{1}{2}(\tr[G_E]-\frac{\lam_M}{\lam_m}\|G_E\|)\}, \label{eq:roa1}\\
			\|e_{\w_E}(0)\|^2<  k_E(\psi_E-\Psi_E(0)).\label{eq:roa2}
			\end{gather}
		Then, the desired equilibrium given by $(R,\Omega,\bar R,\bar\Omega)=(R_d,\Omega_d,R,\Omega)$ is exponentially stable. 

\end{prop}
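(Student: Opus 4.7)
The plan is to build a composite Lyapunov function combining an augmented version of the observer Lyapunov function $\U$ from Proposition \ref{prop:observer} with a standard PD-tracking Lyapunov function, and to exploit that the velocity-free control differs from the full-state PD law only by a term linear in the observer angular-velocity error.

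First I would derive the closed-loop controller error dynamics. Substituting \refeqn{free} into \refeqn{ed3} and writing $\bar{e}_\W = e_\W + (\bar\W - \W)$, the key observation is that from $e_{\w_E}=J(\w-\bar\w)$ together with $J=RJ_0R^\T$ and $\W=R^\T\w$ one obtains $\W-\bar\W = J_0^{-1}R^\T e_{\w_E}$. Hence the velocity-free closed loop reads
\begin{align*}
J_0\dot e_\W = -k_R e_R - k_\W e_\W + k_\W J_0^{-1}R^\T e_{\w_E} + \hat\chi e_\W,
\end{align*}
i.e.\ the nominal PD error dynamics perturbed by a term driven linearly by $e_{\w_E}$. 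This is the coupling that the separation argument must handle.

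Next I would define, for small positive constants $c_1,c_2$,
\begin{align*}
\V_c &= \tfrac{1}{2}e_\W^\T J_0 e_\W + k_R \Psi + c_1\, e_R\cdot J_0 e_\W, \\
\V_o &= \|e_{\w_E}\|^2 + k_E\Psi_E + c_2\, e_{R_E}\cdot e_{\w_E},
\end{align*}
and take $\V = \V_c + \V_o$. The cross terms are the standard device for promoting asymptotic decay to exponential decay on $\SO$. Proposition \ref{prop:erdyn}(ii) and its controller-side analogue from \cite{LeeCST13} give local quadratic upper and lower bounds on $\Psi_E$ and $\Psi$; the hypotheses \refeqn{roa1}--\refeqn{roa2} confine the trajectory to a sublevel set where both bounds apply, so that for $c_1,c_2$ chosen sufficiently small, $\V$ is sandwiched between two positive-definite quadratics in $(e_R,e_\W,e_{R_E},e_{\w_E})$. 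The inertia/weighting condition \refeqn{ratio} and the bound $\bar\psi_E < \tfrac{1}{2}(\tr[G_E]-\tfrac{\lam_M}{\lam_m}\|G_E\|)$ are tailored to guarantee a uniform positive lower bound on the symmetric part of the observer "output matrix" $E_o$ after composition with $J^{-1}$, ensuring the observer sub-block of $\dot\V$ is strictly negative definite.

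I would then compute $\dot\V$ along the closed loop. Using properties (iii)--(vi) of Proposition \ref{prop:erdyn} together with \refeqn{ed1}--\refeqn{ed3} and the closed-loop identity above, the non-cross-term part of $\dot\V_c$ reduces to $-k_\W\|e_\W\|^2 + k_\W e_\W^\T J_0^{-1}R^\T e_{\w_E}$, while the cross term contributes the usual $-c_1 k_R$ coefficient against $\|e_R\|^2$ plus indefinite remainders. The analogous computation for $\dot\V_o$ produces $-k_Ek_v\lam_M^{-1}\|e_{R_E}\|^2$ plus, from the $c_2$ cross term, a negative definite contribution along $\|e_{\w_E}\|^2$ at the price of an indefinite remainder proportional to $c_2\|e_{R_E}\|^2$. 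Bounding every indefinite term by Young's inequality yields a quadratic form in $(\|e_R\|,\|e_\W\|,\|e_{R_E}\|,\|e_{\w_E}\|)$ whose matrix is block-triangularly coupled through the single off-diagonal entry $k_\W\lam_m^{-1}$ coming from the observer-to-controller coupling. Choosing $c_1,c_2$ small, and the observer gains $k_E,k_v$ large enough relative to $k_\W$ and $\lam_m^{-1}$, makes this matrix positive definite, giving $\dot\V \le -\lambda\V$ on the chosen sublevel set and thus exponential stability of $(e_R,e_\W,e_{R_E},e_{\w_E})=0$, equivalent to the stated equilibrium.

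The principal obstacle is the last step: dominating the coupling term $k_\W e_\W^\T J_0^{-1}R^\T e_{\w_E}$ (together with the indefinite remainders generated by differentiating the cross terms in $\V_c$ and $\V_o$) by the negative definite diagonal without blowing up the allowed region of attraction. This is where condition \refeqn{ratio} on $\lam_M/\lam_m$ versus $\tr[G_E]/\|G_E\|$ enters, as it is precisely what is needed to keep the symmetric part of the observer measurement matrix uniformly positive on the sublevel set defined by \refeqn{roa1}, and thereby to make the Young-type coefficient matching feasible without additional restrictions on $k_\W$ or the desired trajectory.
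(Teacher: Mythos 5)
Your proposal follows essentially the same route as the paper's proof: the identity $\bar{e}_\W = e_\W - J_0^{-1}R^\T e_{\w_E}$, the composite Lyapunov function $\V=\V_c+\V_o$ with cross terms, sublevel-set invariance via \refeqn{roa1}--\refeqn{roa2} and the non-increase of $\U$, and the role of \refeqn{ratio} in keeping $\tr[Q_EG_E]\lam_m-\|G_E\|\lam_M>0$ so the observer block of $\dot\V$ is negative definite. One small correction: the observer cross term must enter with a minus sign, $\V_o=\|e_{\w_E}\|^2+k_E\Psi_E-c_2\,e_{\w_E}^\T e_{R_E}$, so that differentiating it through $\dot{e}_{R_E}=E_o\w_E$ and $\dot{e}_{\w_E}=-\tfrac{1}{2}k_EJ^{-1}e_{R_E}$ produces the negative $\|e_{\w_E}\|^2$ contribution you describe; with your $+c_2$ convention that quadratic term comes out with the destabilizing sign.
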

\begin{proof} 
See Appendix.
\end{proof} 

This proposition implies a separation property that the presented angular velocity observer guarantees exponential stability even when combined with an attitude tracking control system. Unlike~\cite{Chu2014} where a switching logic, that may cause frequent switchings, is introduced, the trajectories along the presented velocity-free attitude control scheme is free of discontinuities. This is critical in practice, as illustrated by numerical examples in the next section.

While the performance of the angular velocity observer presented in the previous section is independent of the inertia matrix, the ratio of the maximum eigenvalue to the minimum eigenvalue of the inertia matrix should satisfy \refeqn{ratio} for the separation property when combined with the attitude tracking control system. Most of existing spacecraft satisfy the assumption \refeqn{ratio}, and several numerical studies show that the separation property still holds even for various elongated rigid bodies that do not satisfy \refeqn{ratio}. Extension of the presented results to eliminate \refeqn{ratio} is referred to as future investigation.

\section{Numerical Examples} \label{sec:num}

To illustrate the performance of the presented angular velocity observer, we consider two cases for attitude stabilization and attitude tracking. 

\subsection{Attitude Stabilization}

We first consider a case of detumbling a rigid body, where the desired attitude and angular velocity are given by $R_d=\I$ and $\W_d=0$. The inertial matrix is given by $J_0=\diag[5,1,2]\,\mathrm{kgm^2}$. The initial condition is specified as $R(0)=\text{exp}(\pi/4\hat{e}_1)$ and $\W(0)=[1,\,-1.5,\,2.5]\,\mathrm{rad/sec}$ where $e_1=[1~0~0]^\T$. The matrix $G$ and $G_E$ are selected to be $G=G_E=\text{diag}[1.1,\, 1,\, 0.9]$. In particular, the control gains are given by $k_R=16J_0$, $k_\W=k_v=5.6J_0$, and $k_E=10J_0$. Note that the controller gains $k_R, k_\W, k_v$ are given to be scalars throughout this paper but they can be easily generalized to symmetric positive definite matrices. Numerical result of the proposed observer is illustrated at Fig.\ref{fig:SO3-1}, which exhibits excellent convergence properties. 

For a comparison, the angular velocity observer and controller presented~\cite{Chu2014} are applied as well, and the corresponding numerical results are illustrated at Fig. \ref{fig:switch-1}. As it is developed in terms of quaternions, the attitude estimation error and the attitude tracking error are plotted as the scalar part and the vector part of quaternions. It is shown that there are frequent switchings when $t\leq 3$ seconds, and the corresponding control input has high-frequency oscillations. 

\begin{figure}[h]
\centerline{
	\hspace*{0.015\columnwidth}
	\subfigure[Attitude estimation error $e_{R_E}$]{\includegraphics[width=0.5\columnwidth]{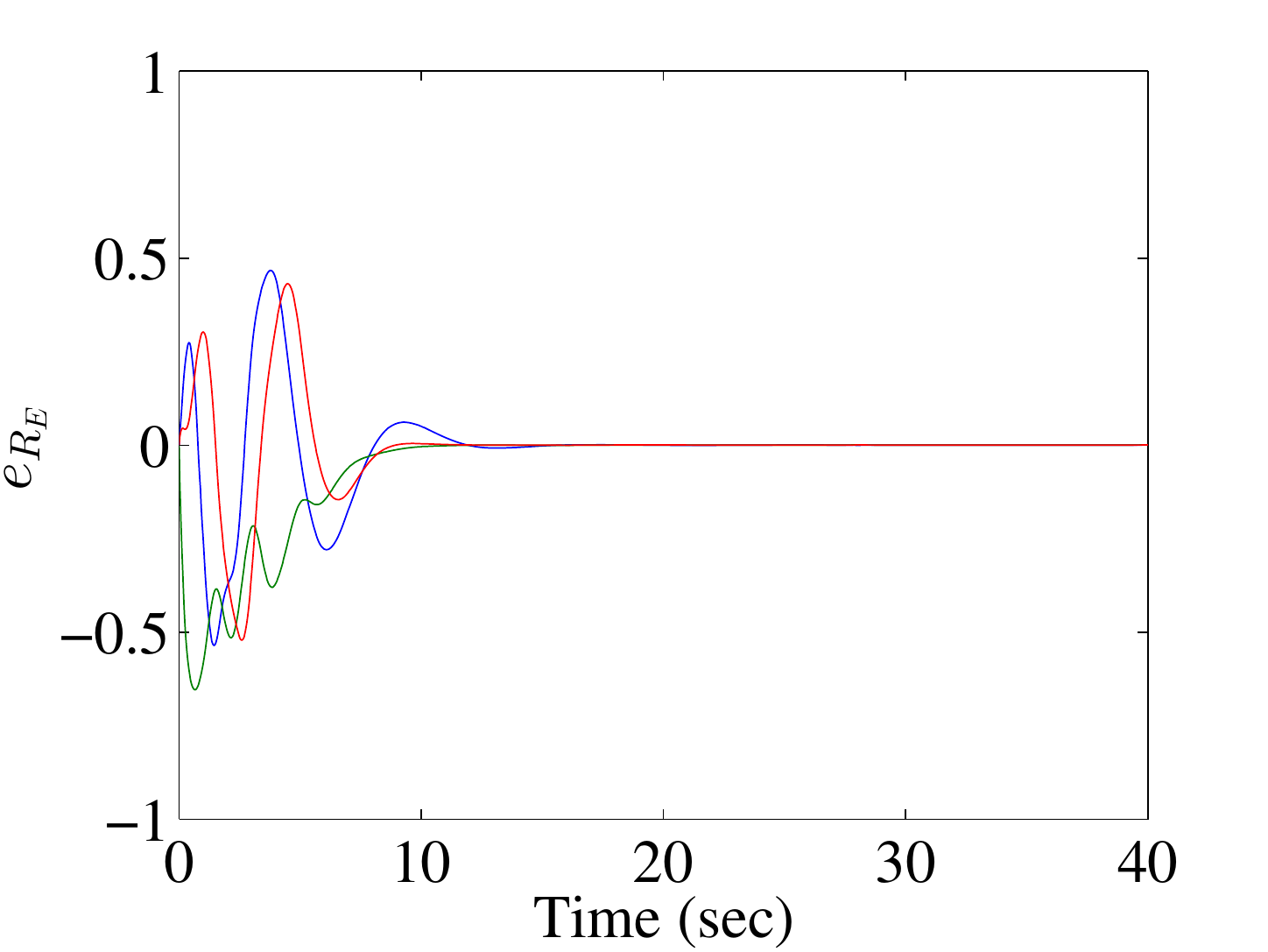}}
	\hspace*{-0.015\columnwidth}
	\subfigure[Angular velocity estimation error $\w-\bar{\w}$]	
{\includegraphics[width=0.5\columnwidth]{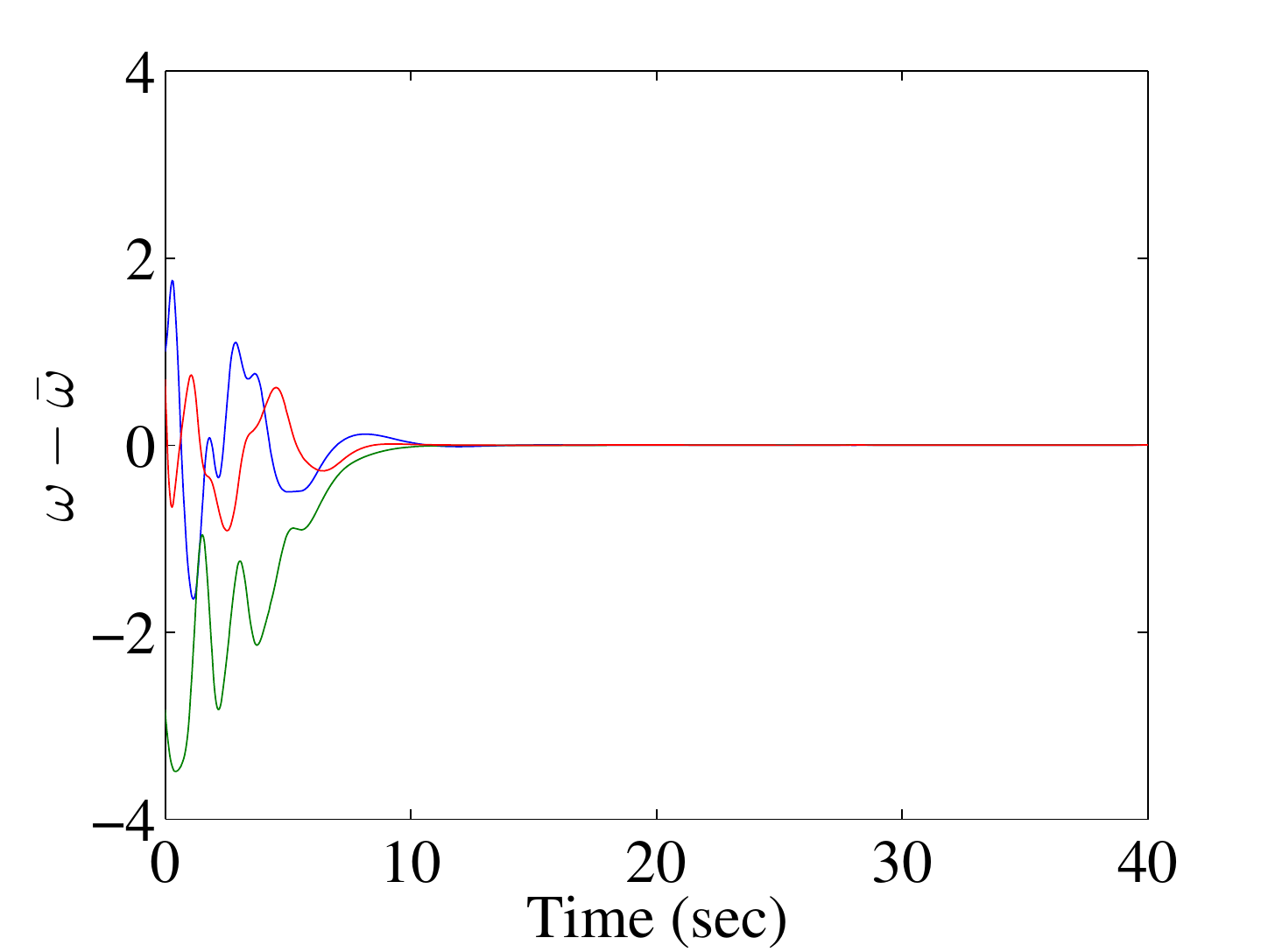}}
}	
\vspace*{-0.02\columnwidth}
\centerline{
	\hspace*{0.015\columnwidth}
	\subfigure[Attitude stabilization error $e_R$]{\includegraphics[width=0.5\columnwidth]{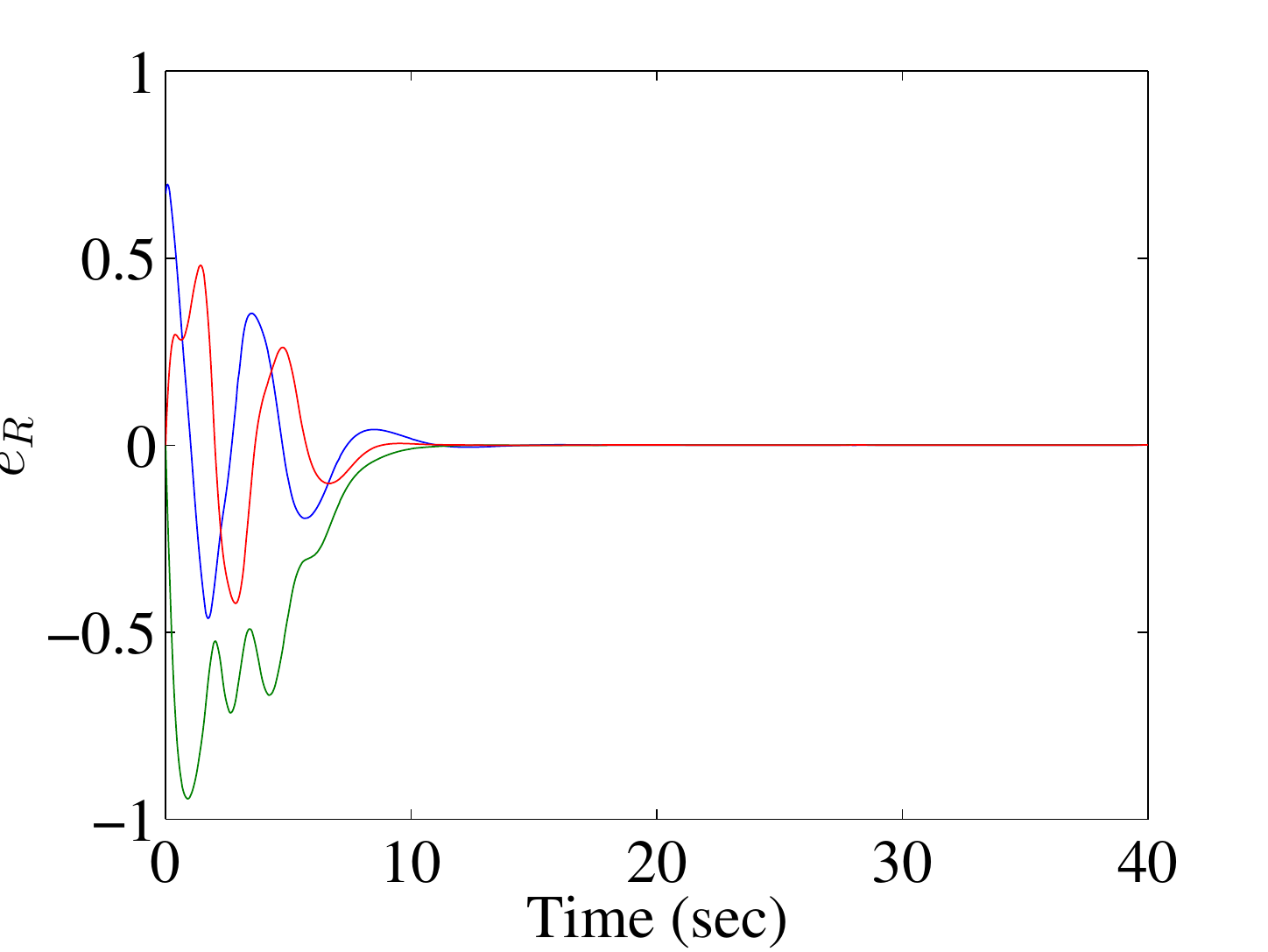}}
	\hspace*{-0.015\columnwidth}
	\subfigure[Angular velocity stabilization error $e_{\Omega}$]{\includegraphics[width=0.5\columnwidth]{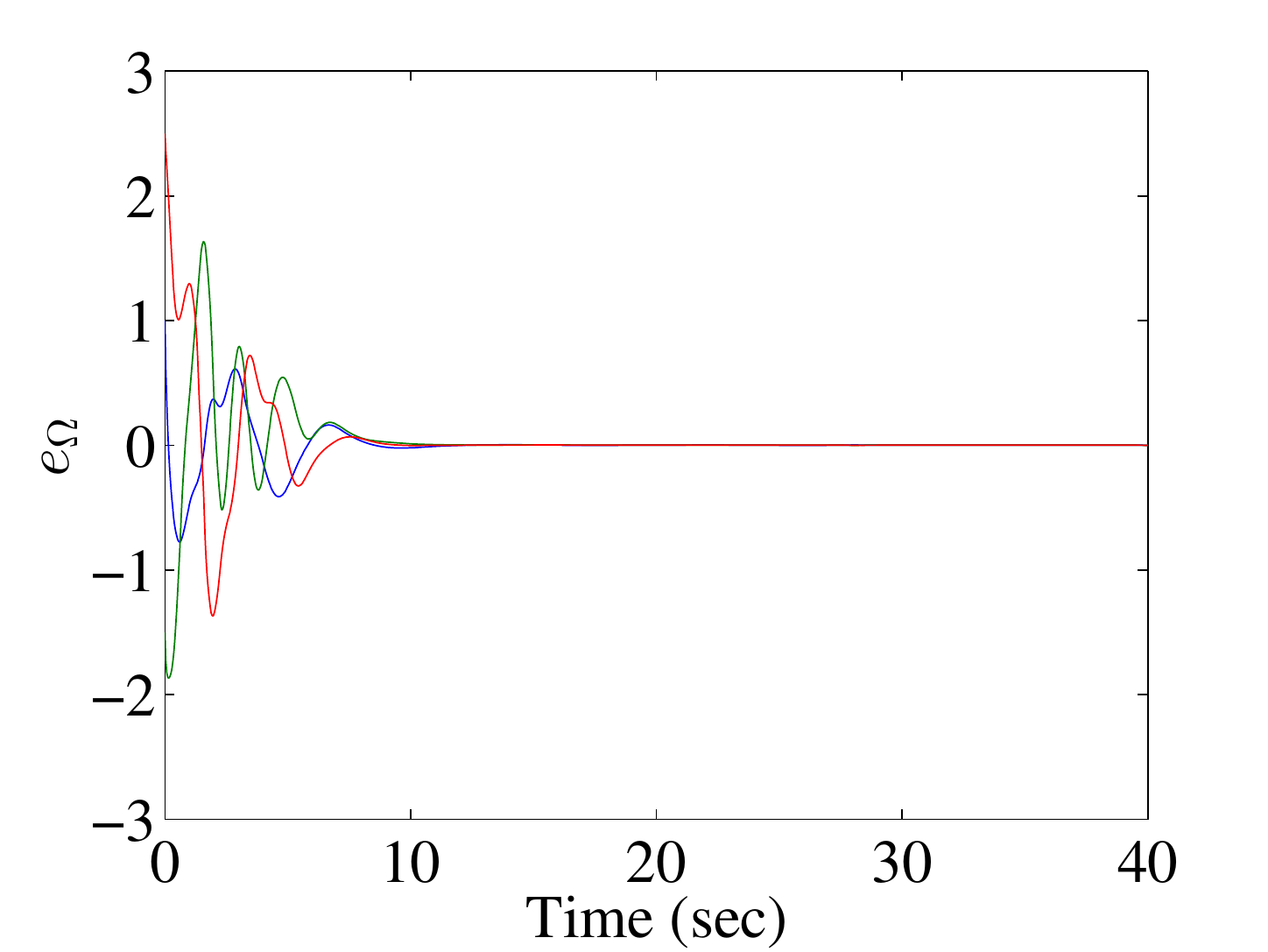}}
}
\centerline{
	\subfigure[Control moment $u$]{\includegraphics[width=0.5\columnwidth]{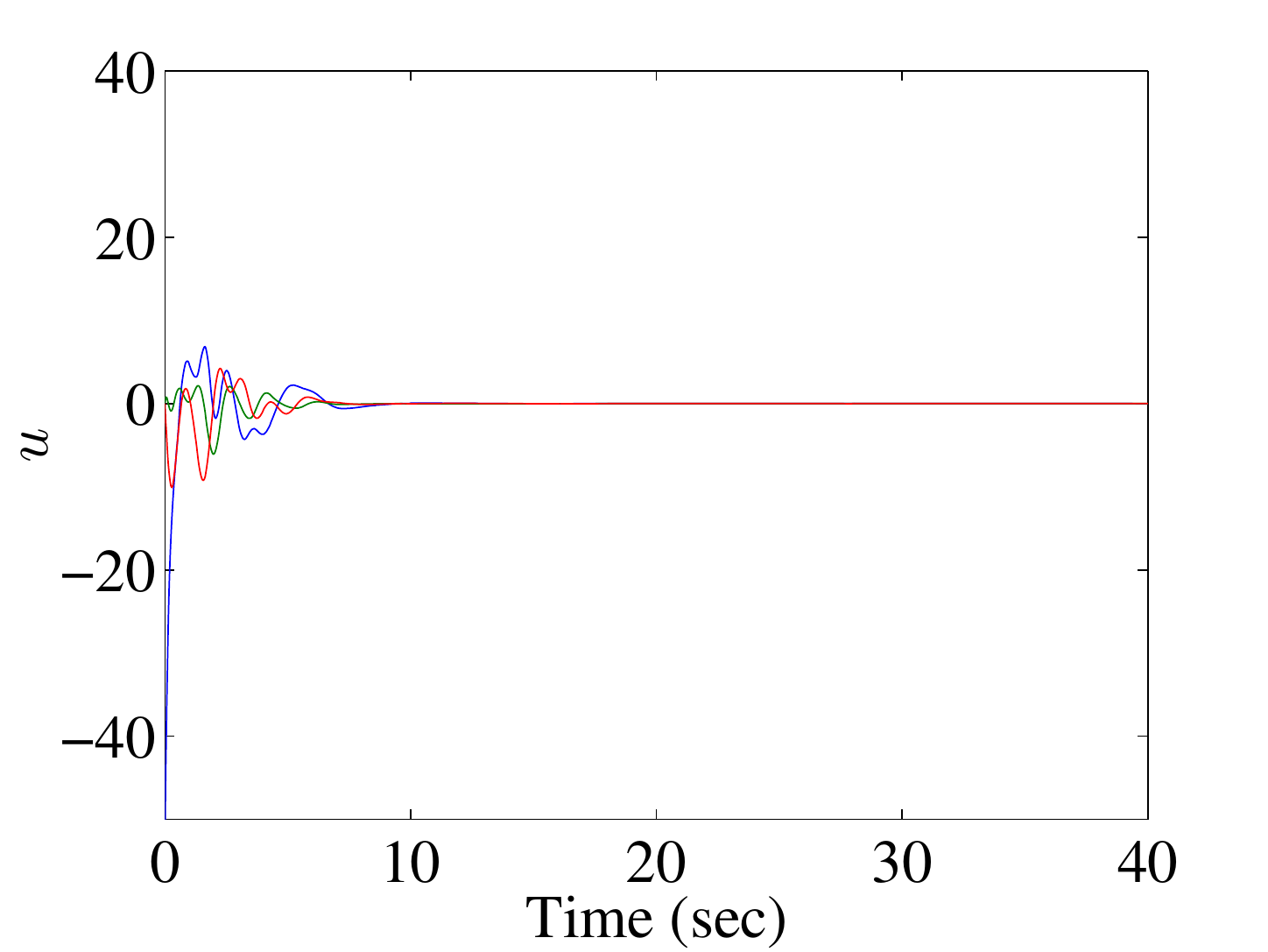}}
}
\caption{Attitude stabilization with the presented angular velocity observer on $\SO$}\label{fig:SO3-1}
\end{figure}

\begin{figure}[h]
\centerline{
	\hspace*{0.015\columnwidth}
	\subfigure[Attitude estimation error (scalar part of quaternion)]{\includegraphics[width=0.5\columnwidth]{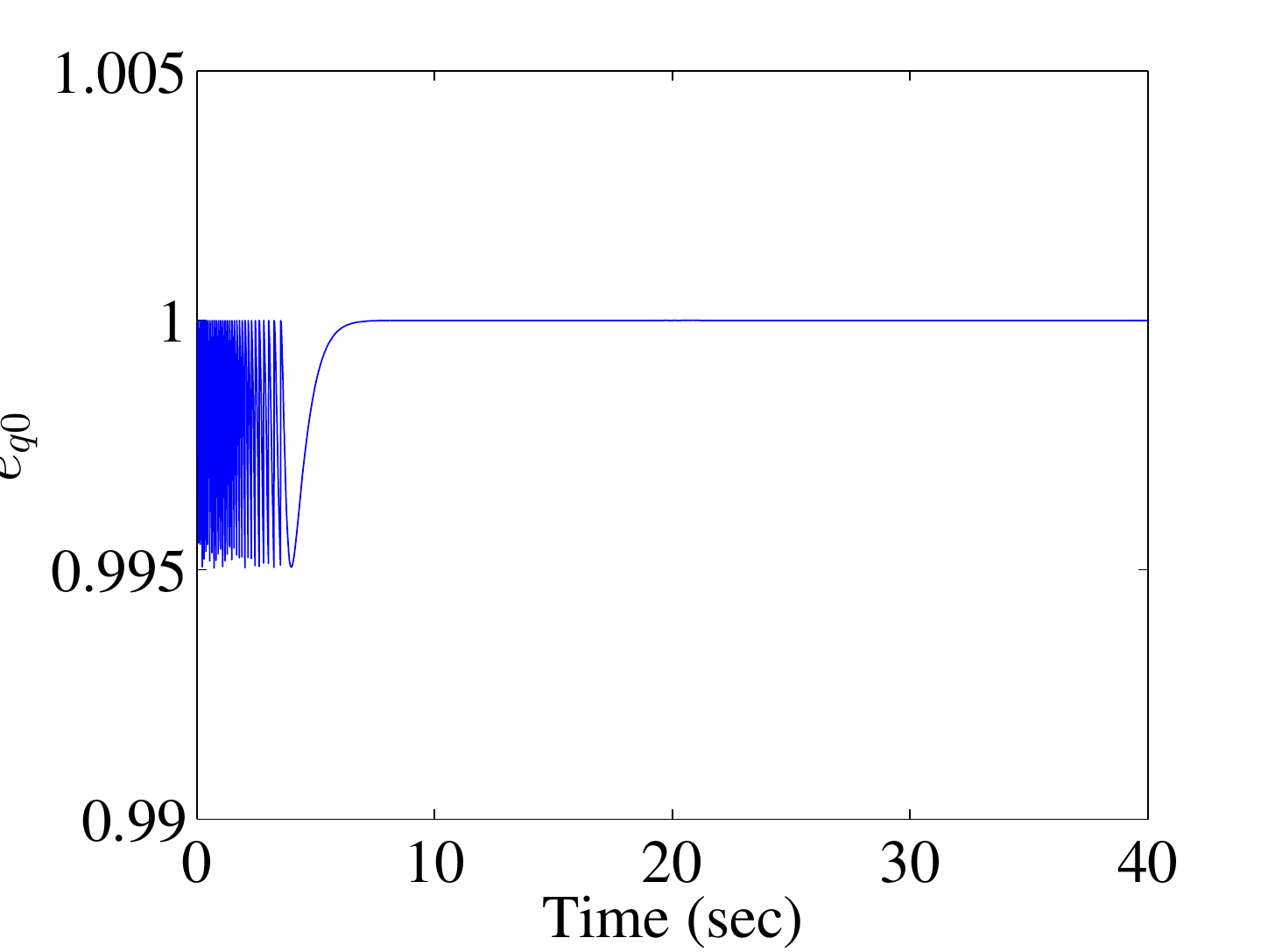}}
	\hspace*{-0.015\columnwidth}
	\subfigure[Attitude estimation error (vector part of quaternion)]{\includegraphics[width=0.5\columnwidth]{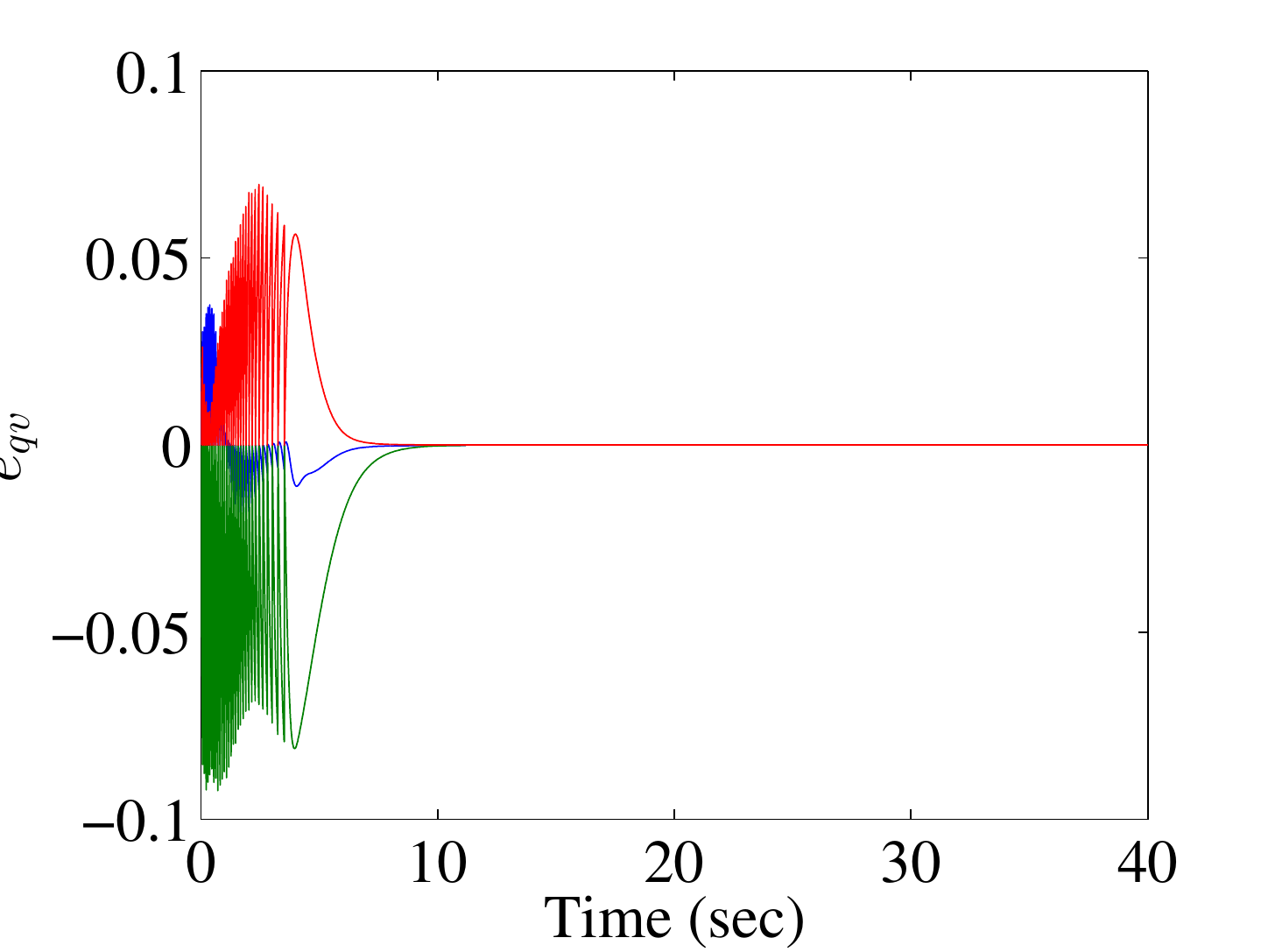}}
}	
\vspace*{-0.02\columnwidth}
\centerline{
	\hspace*{0.015\columnwidth}
	\subfigure[Angular velocity estimation error $\W-\bar{\W}$]{\includegraphics[width=0.5\columnwidth]{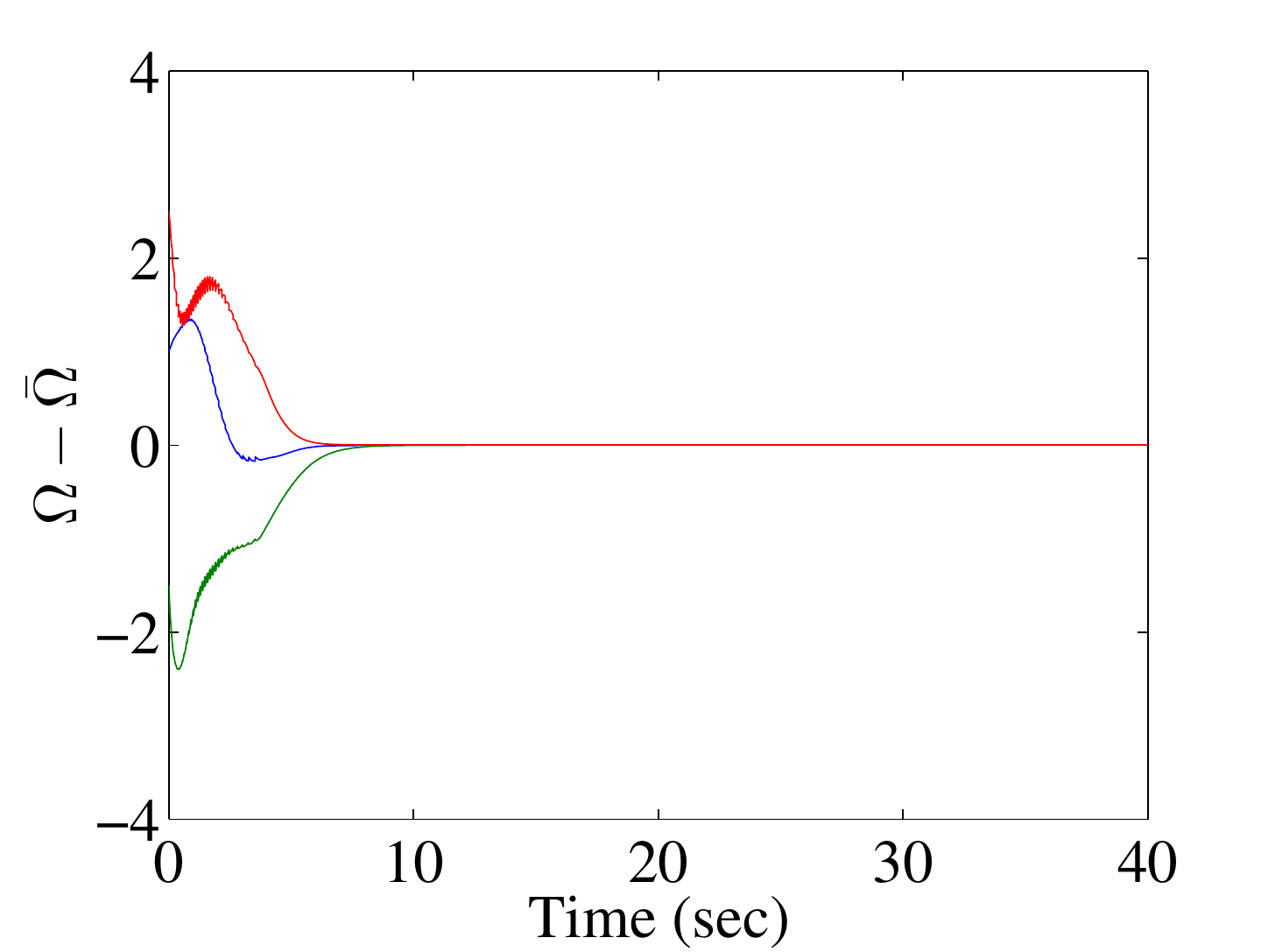}}
	\hspace*{-0.015\columnwidth}
	\subfigure[Control moment $u$]{\includegraphics[width=0.5\columnwidth]{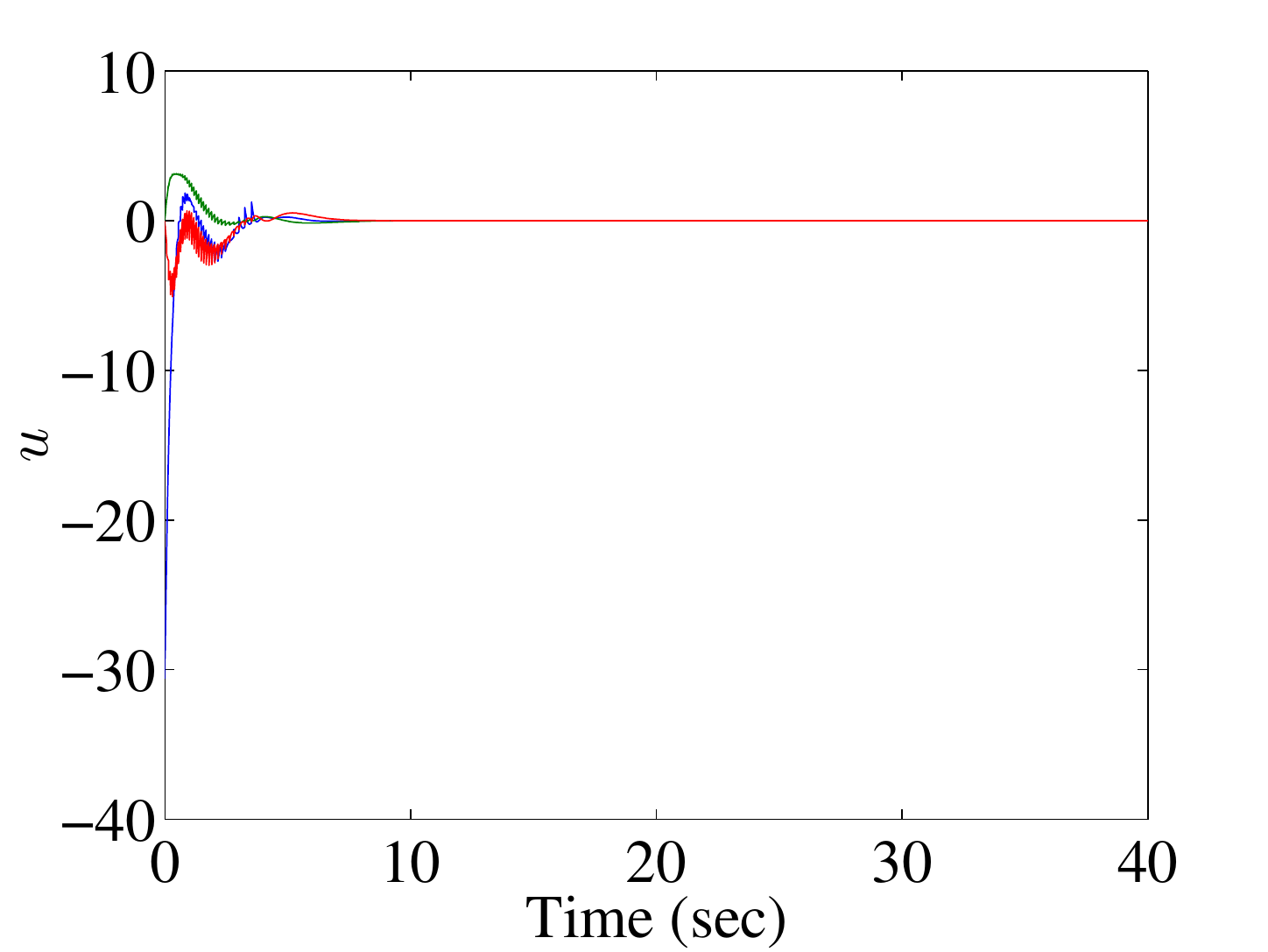}}
}
\vspace*{-0.02\columnwidth}
\centerline{
	\hspace*{0.015\columnwidth}
	\subfigure[Attitude stabilization error (scalar part of quaternion)]{\includegraphics[width=0.5\columnwidth]{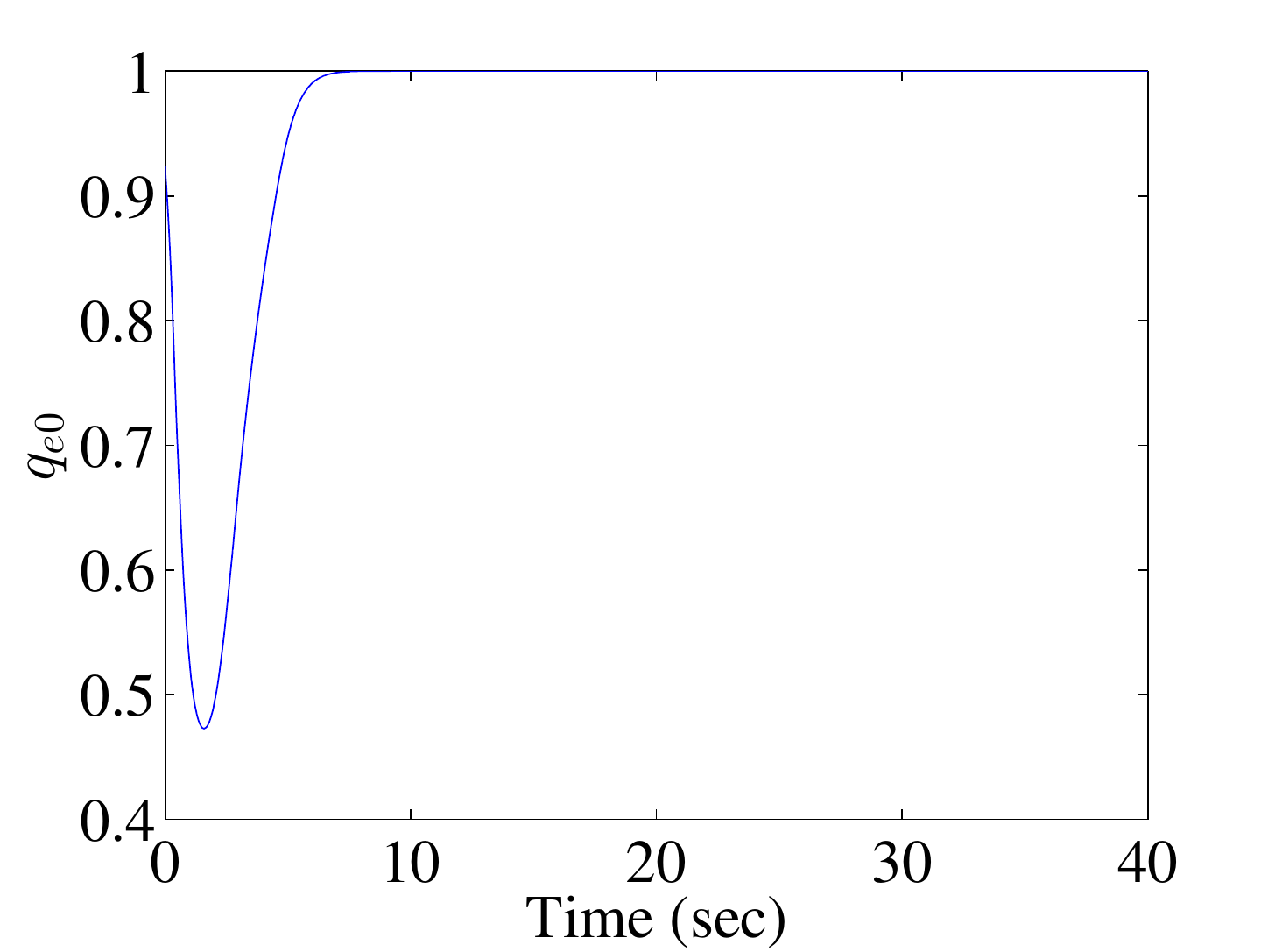}}
	\hspace*{-0.015\columnwidth}
	\subfigure[Attitude stabilization error (vector part of quaternion)]{\includegraphics[width=0.5\columnwidth]{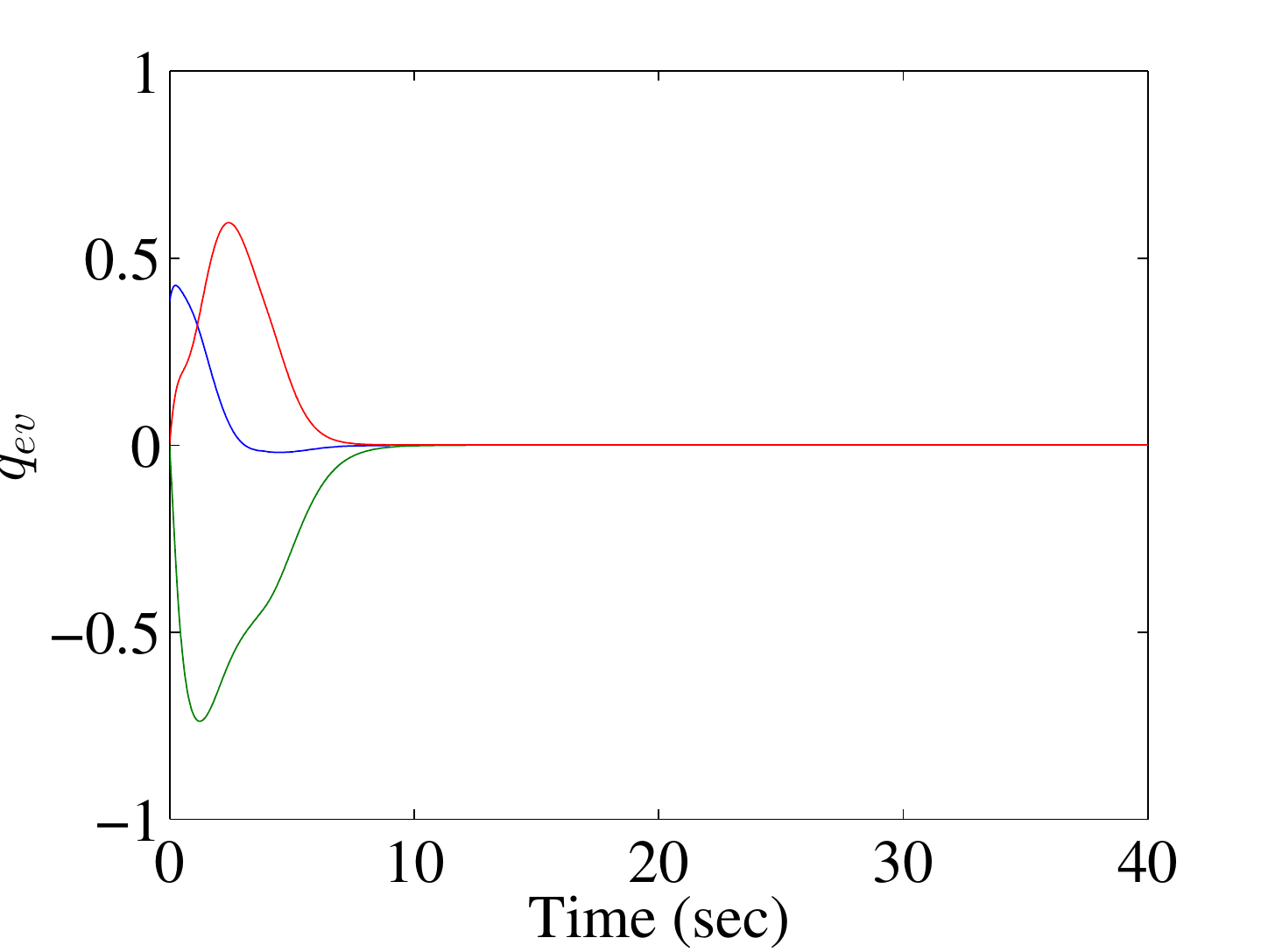}}
}
\caption{Attitude stabilization with the switching angular velocity observer in~\cite{Chu2014}}\label{fig:switch-1}
\end{figure}

\subsection{Attitude Tracking}
Next, we consider an attitude tracking problem. The desired attitude is given in terms of 3-2-1 Euler angles as $R_d(t)=R_d(\alpha(t), \beta(t), \gamma(t))$ where $\alpha(t)$, $\beta(t)$, and $\gamma(t)$ are chosen as $1$, $\sin(0.05t)$ and $\cos(0.1t)+2$, respectively. The initial conditions and control gains are identical to the attitude stabilization example. The corresponding results are illustrated at Fig.~\ref{fig:SO3-2}, where both the estimation errors and the tracking errors converge to zero nicely.

However, when the switching angular velocity~\cite{Chu2014} is applied to the same tracking problem, there are persistent switchings over the entire simulation period as illustrated by Fig.~\ref{fig:switch-2}, and the estimation errors and tracking errors do not converge for the given simulation period of 40 seconds. Such frequent switchings or high-frequency oscillations in the control moment may excite unmodelled dynamics or increase sensitivity to noise. These comparisons illustrate the desirable numerical properties of the proposed angular velocity observer explicitly.

\begin{figure}[h]
\centerline{
	\hspace*{0.015\columnwidth}
	\subfigure[Attitude estimation error $e_{R_E}$]{\includegraphics[width=0.5\columnwidth]{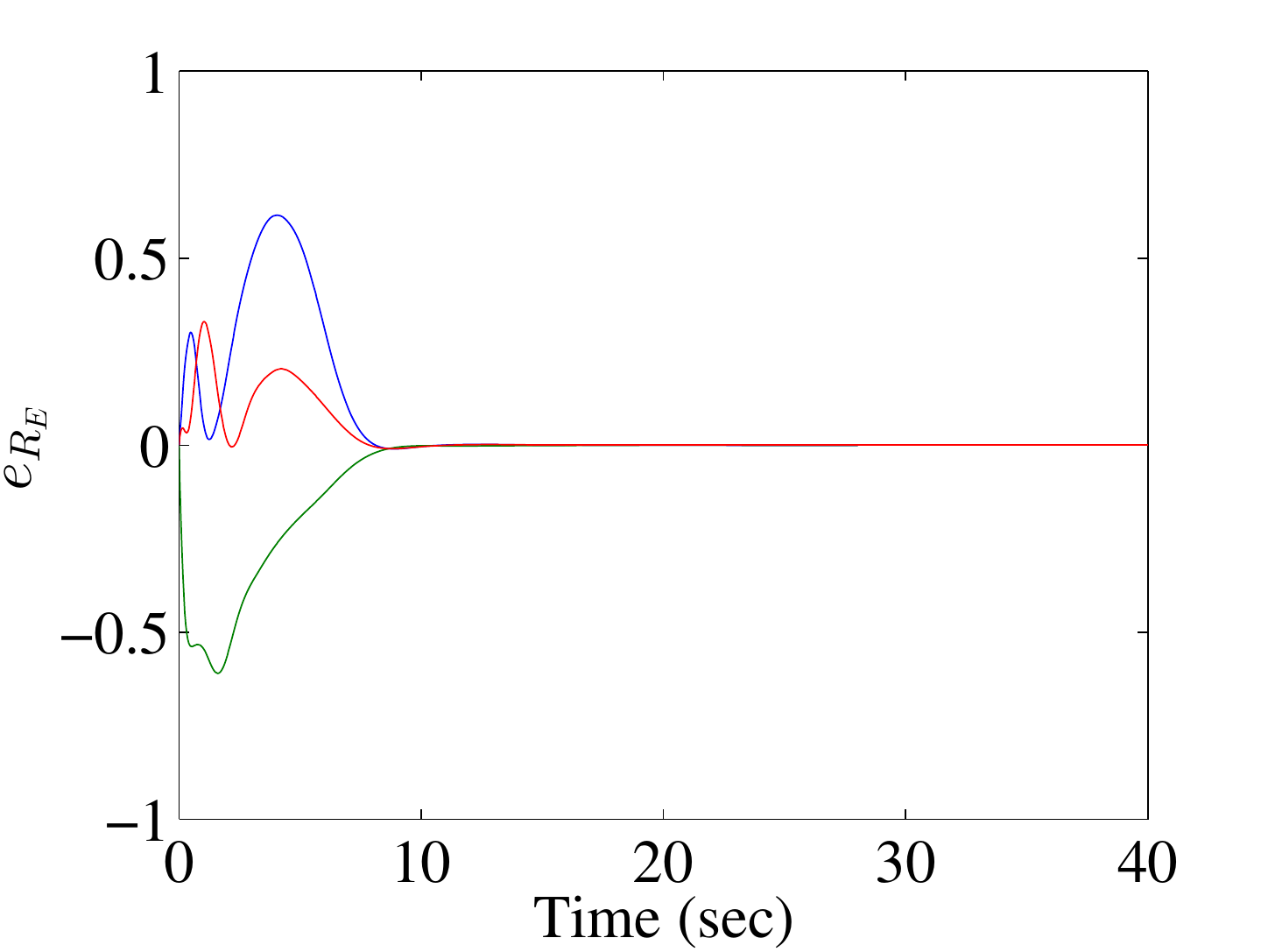}}
	\hspace*{-0.015\columnwidth}
	\subfigure[Angular velocity estimation error $\w-\bar{\w}$]{\includegraphics[width=0.5\columnwidth]{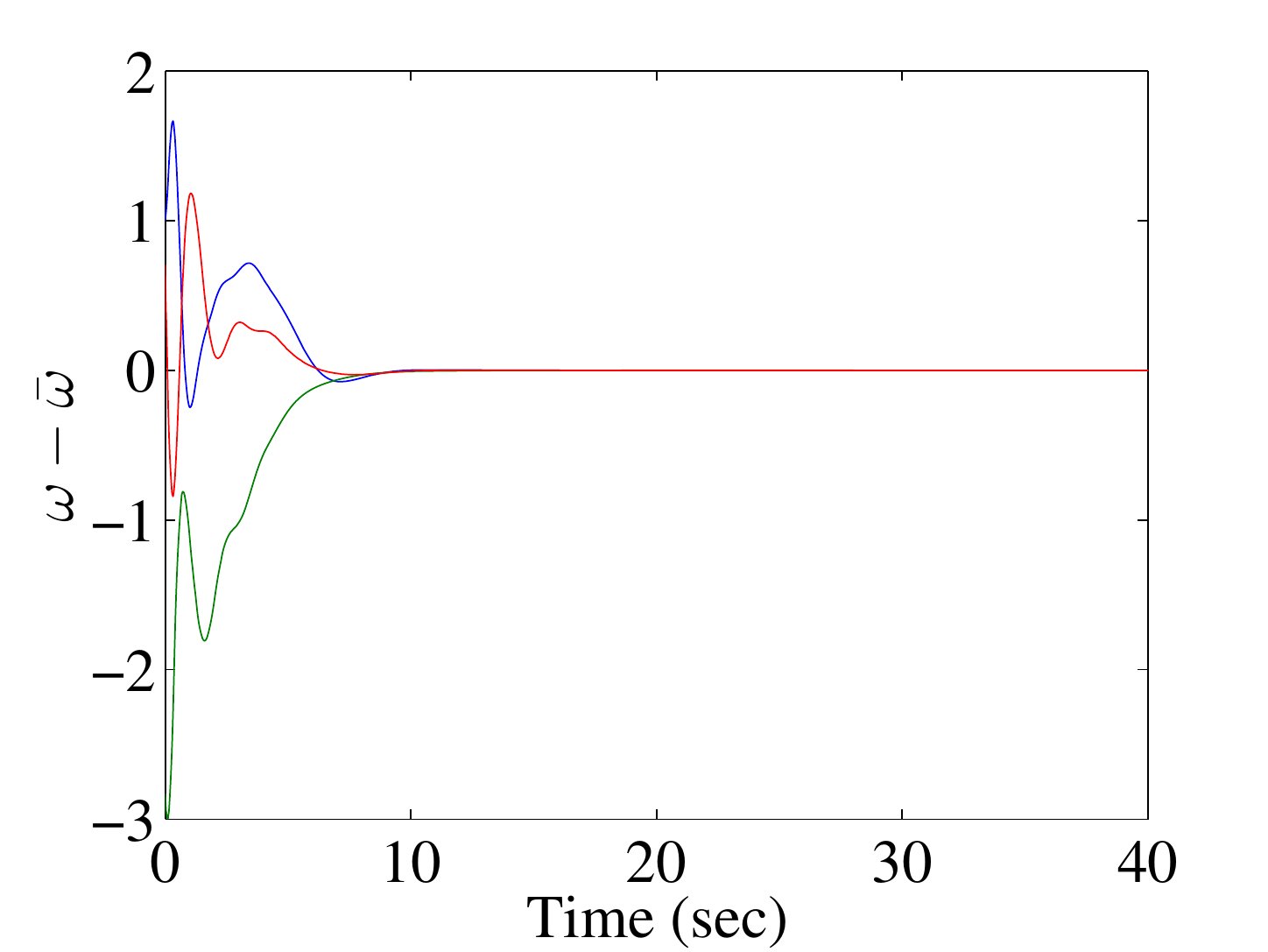}}
}	
\vspace*{-0.02\columnwidth}
\centerline{
	\hspace*{0.015\columnwidth}
	\subfigure[Attitude tracking error $e_R$]{\includegraphics[width=0.5\columnwidth]{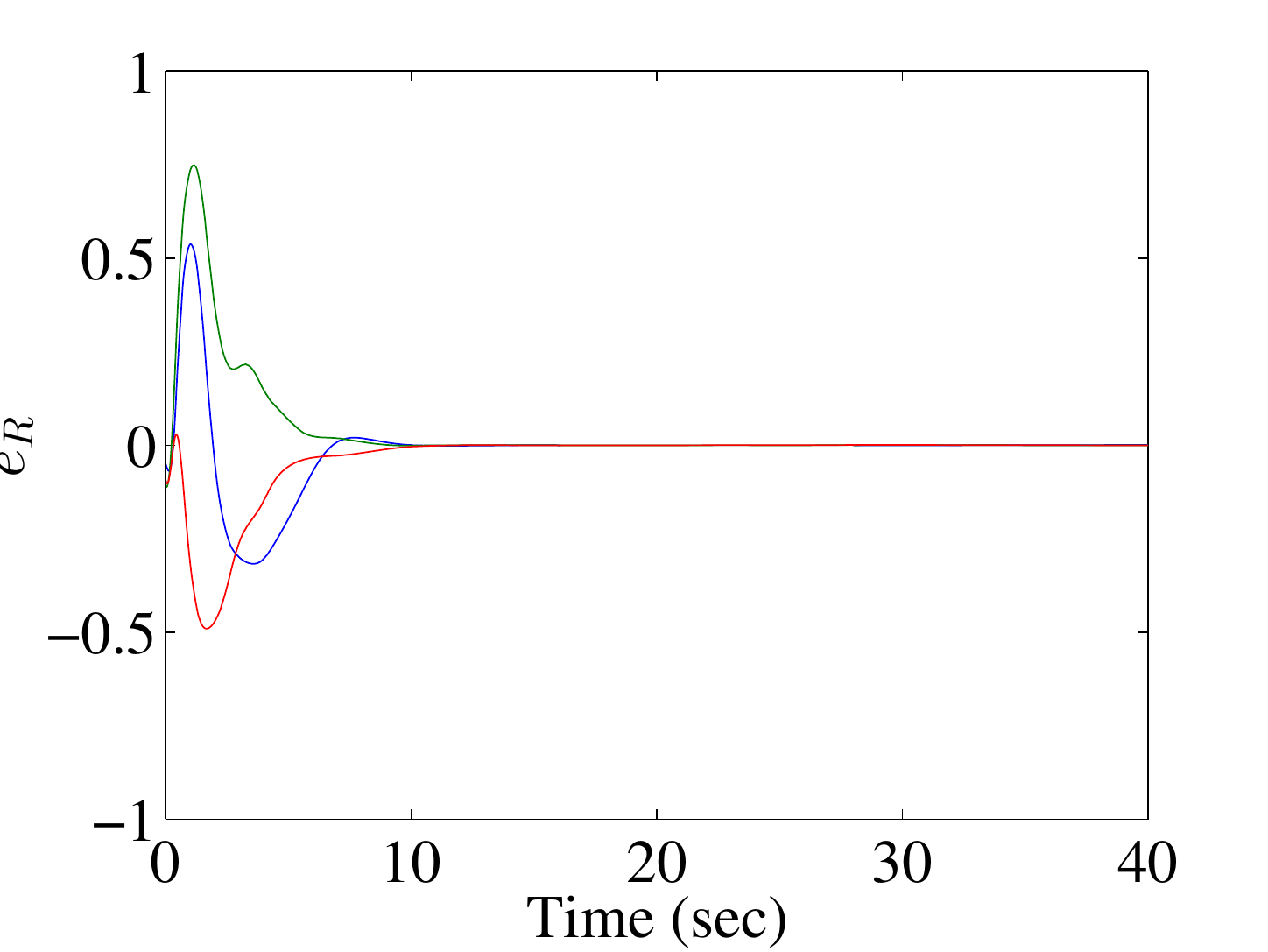}}
	\hspace*{-0.015\columnwidth}
	\subfigure[Angular velocity tracking error $e_{\Omega}$]{\includegraphics[width=0.5\columnwidth]{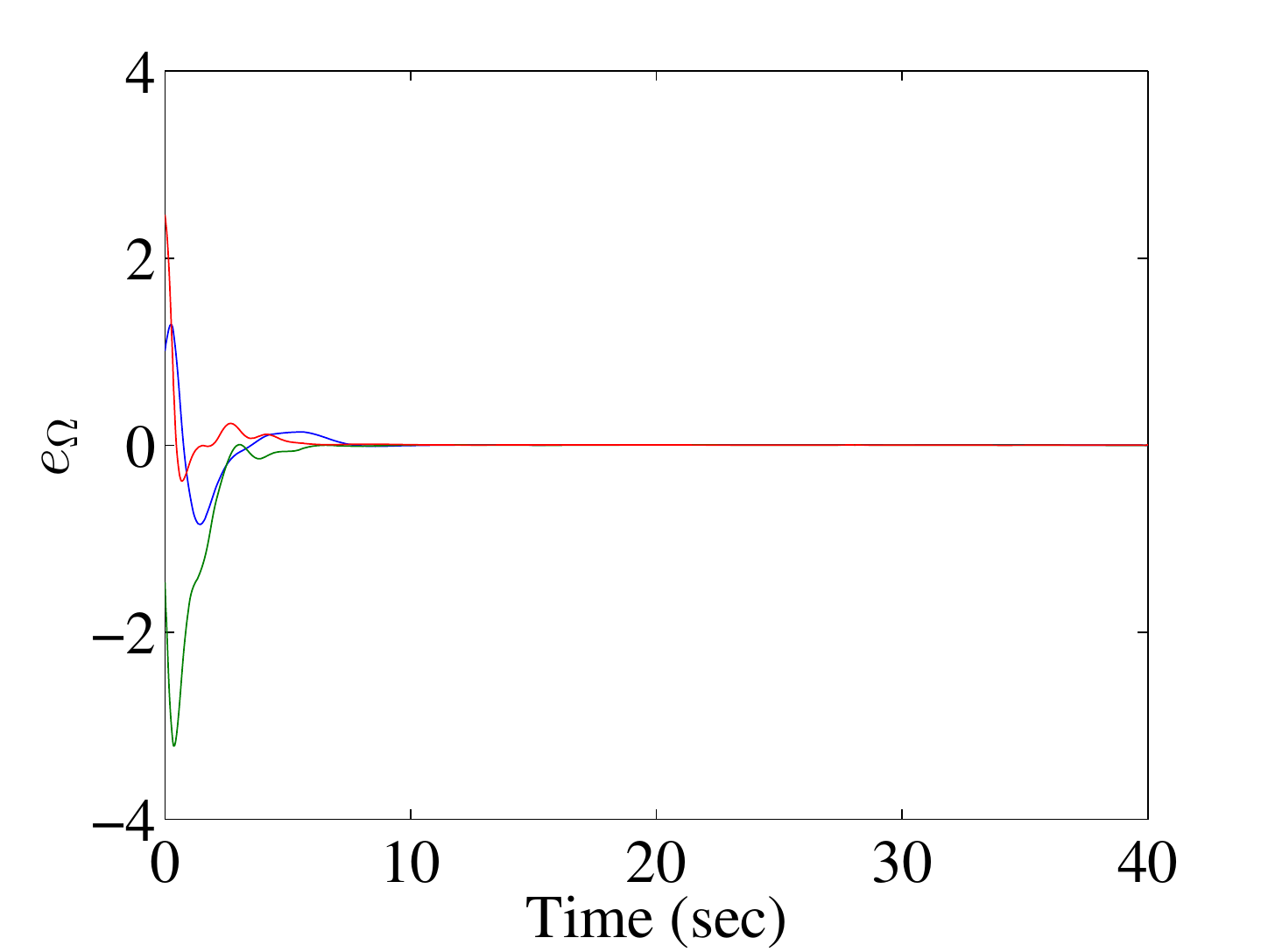}}
}
\centerline{
	\subfigure[Control moment $u$]{\includegraphics[width=0.5\columnwidth]{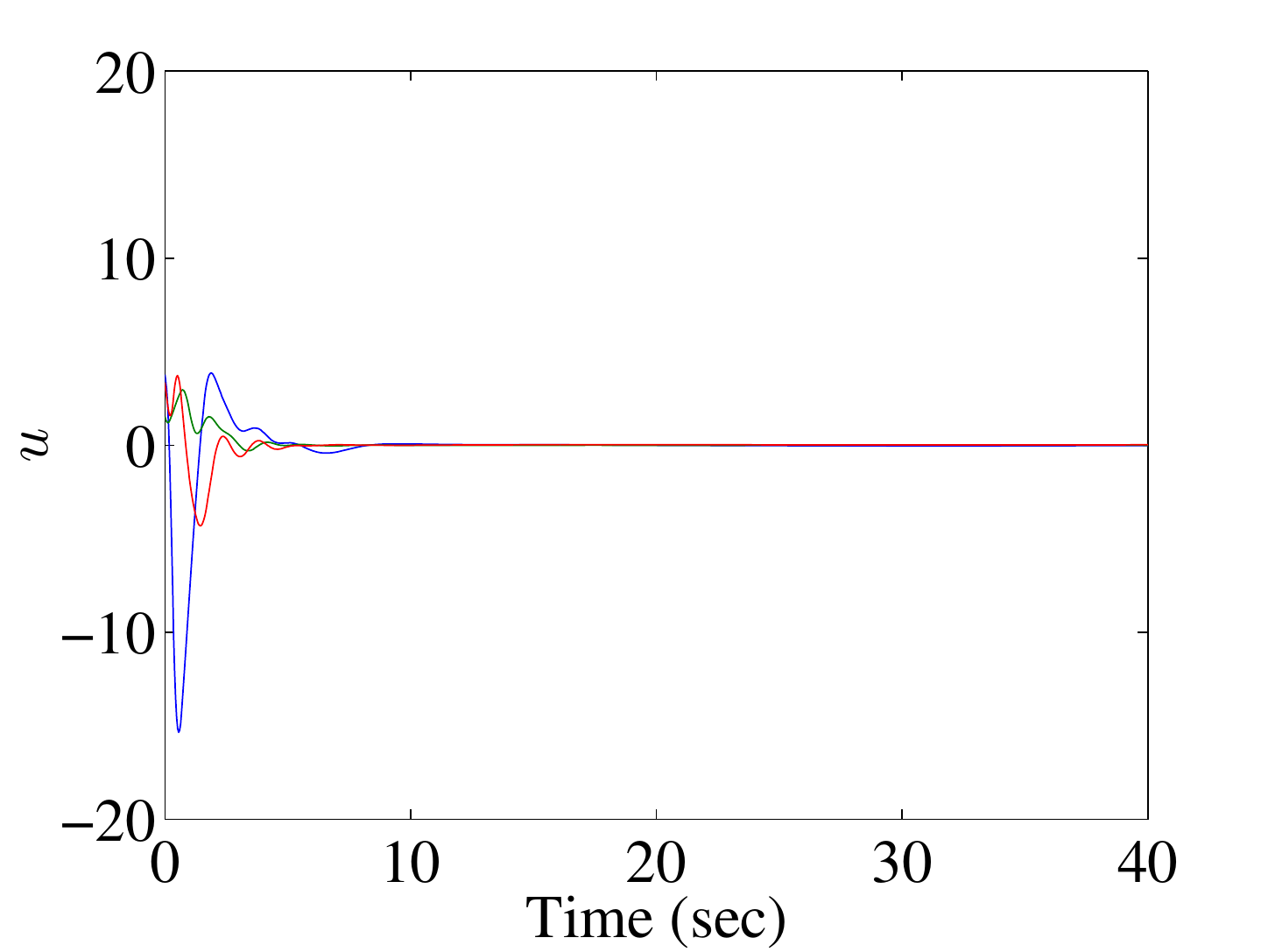}}
}
\caption{Attitude tracking with the presented angular velocity observer on $\SO$}\label{fig:SO3-2}
\end{figure}

\begin{figure}[h]
\centerline{
	\hspace*{0.015\columnwidth}
	\subfigure[Attitude estimation error (scalar part of quaternion)]{\includegraphics[width=0.5\columnwidth]{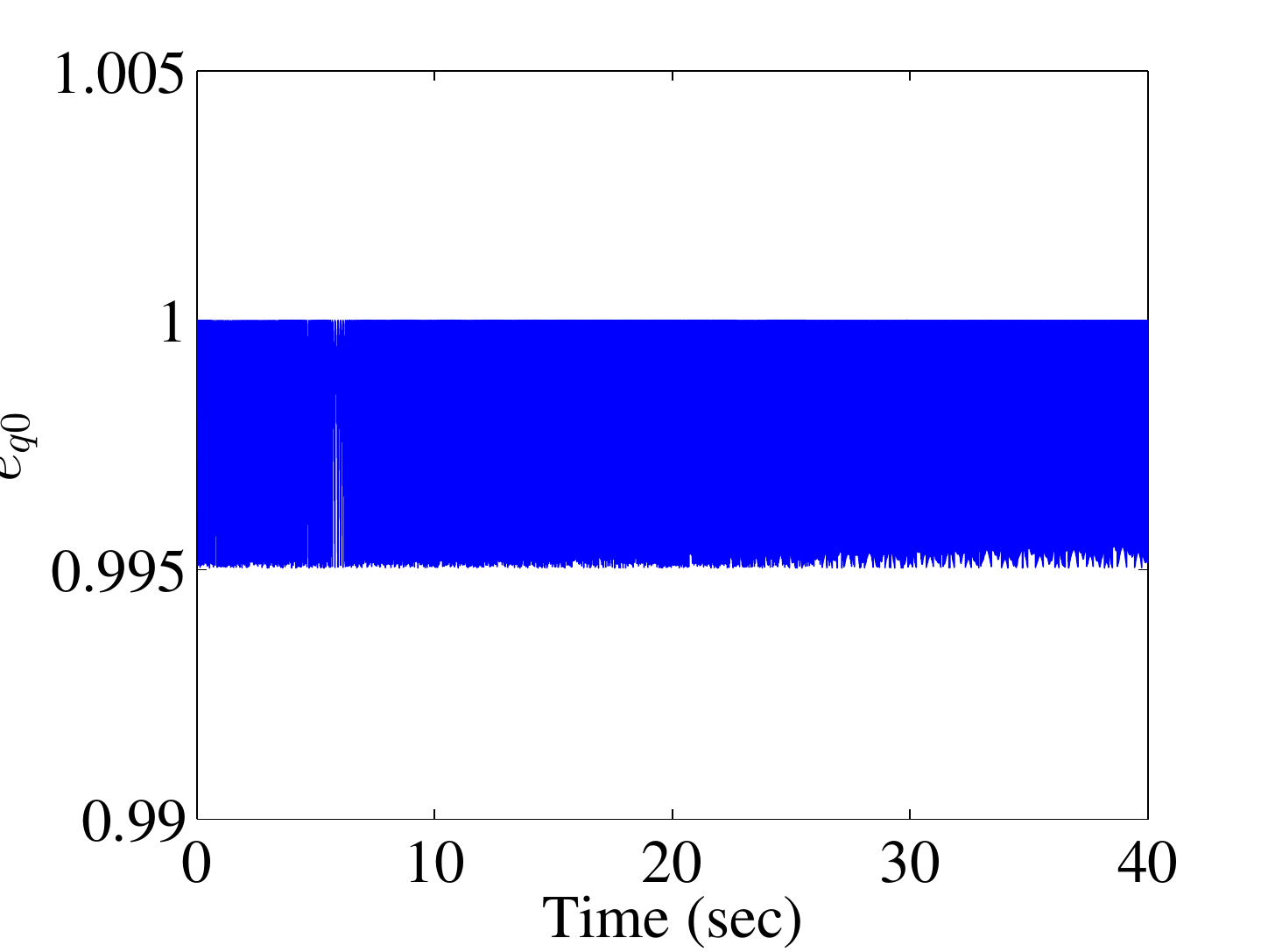}}
	\hspace*{-0.015\columnwidth}
	\subfigure[Attitude estimation error (vector part of quaternion)]{\includegraphics[width=0.5\columnwidth]{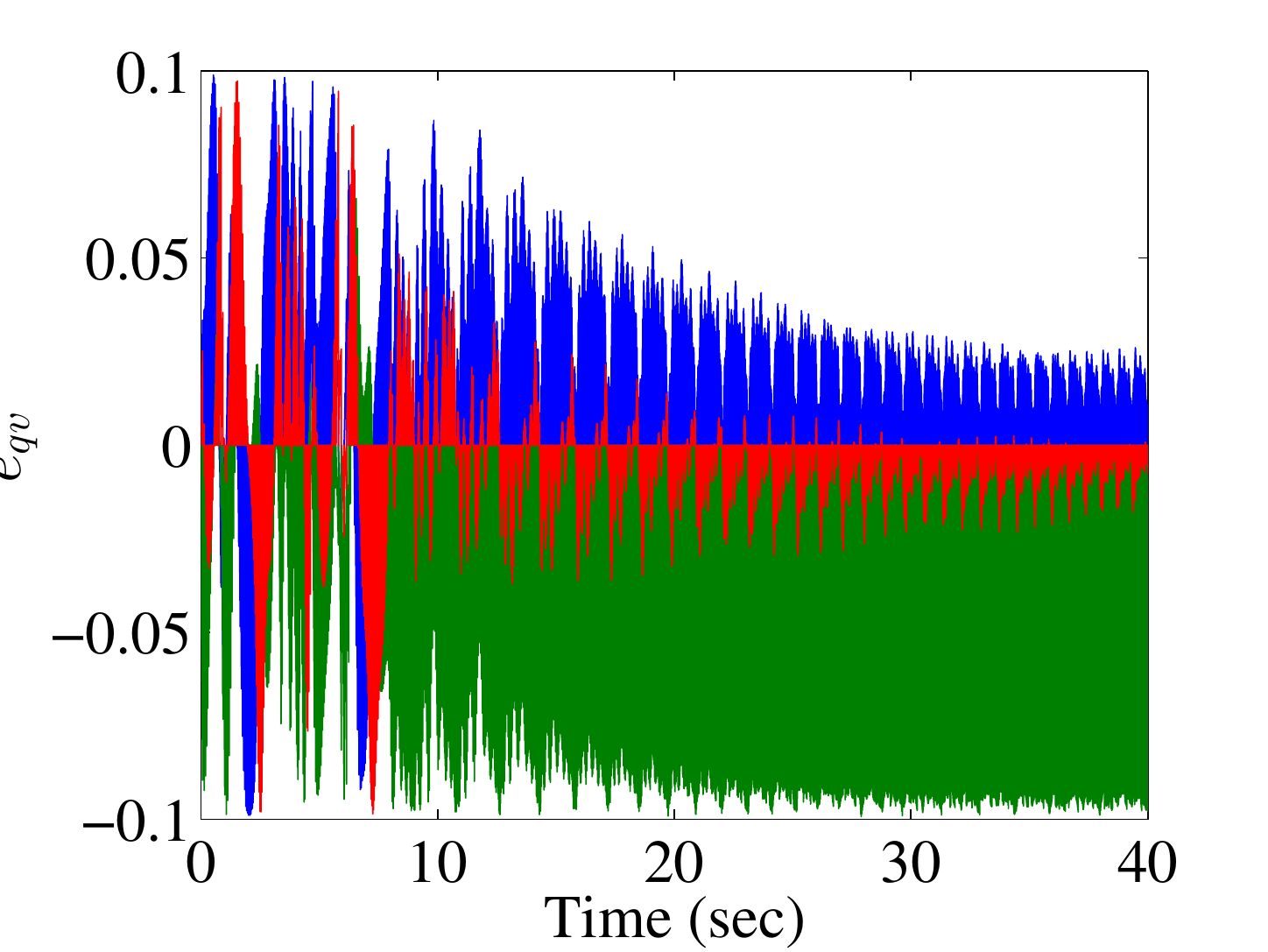}}
}	
\vspace*{-0.02\columnwidth}
\centerline{
	\hspace*{0.015\columnwidth}
	\subfigure[Angular velocity estimation error $\W-\bar{\W}$]{\includegraphics[width=0.5\columnwidth]{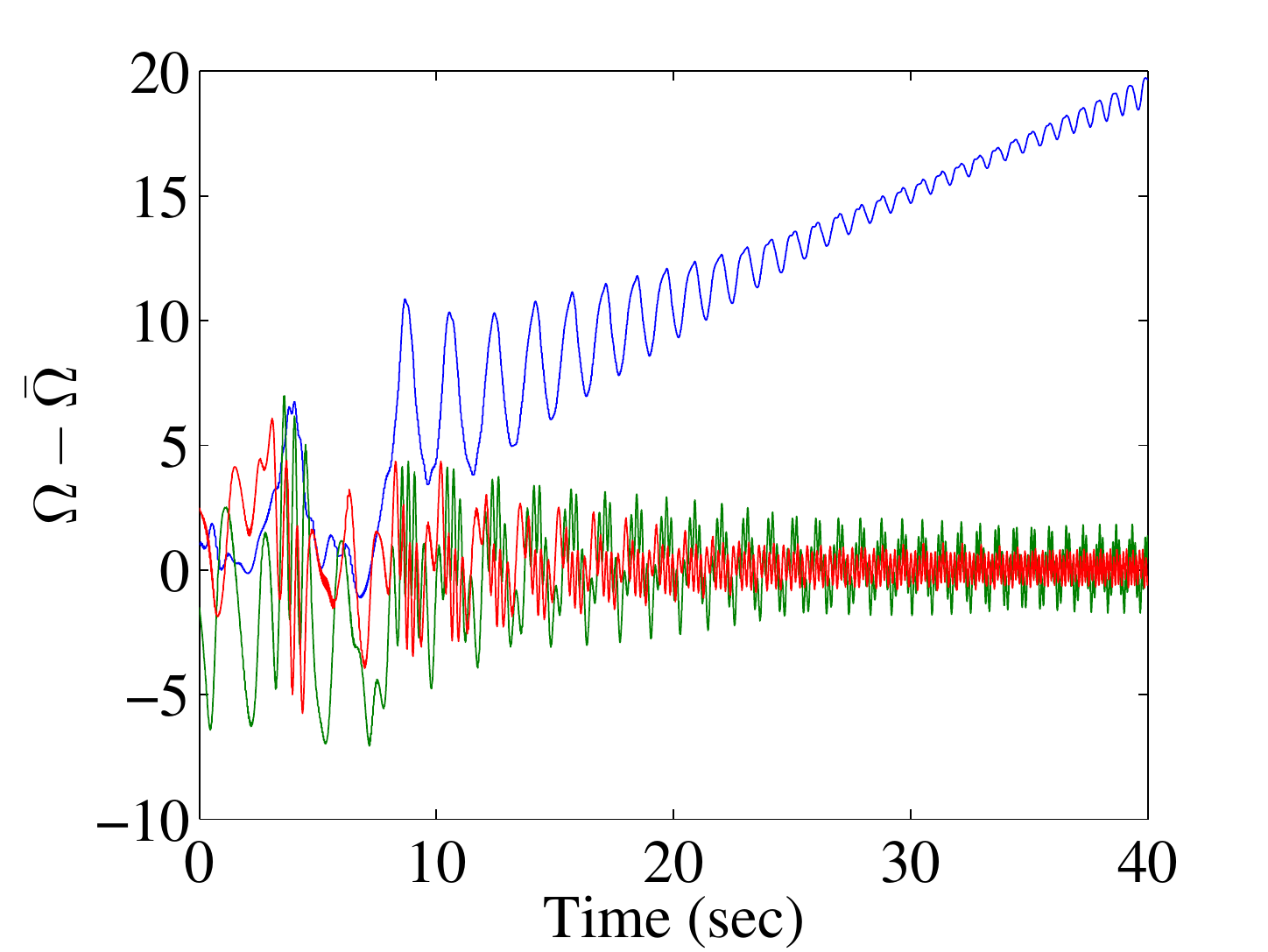}}
	\hspace*{-0.015\columnwidth}
	\subfigure[Control moment $u$]{\includegraphics[width=0.5\columnwidth]{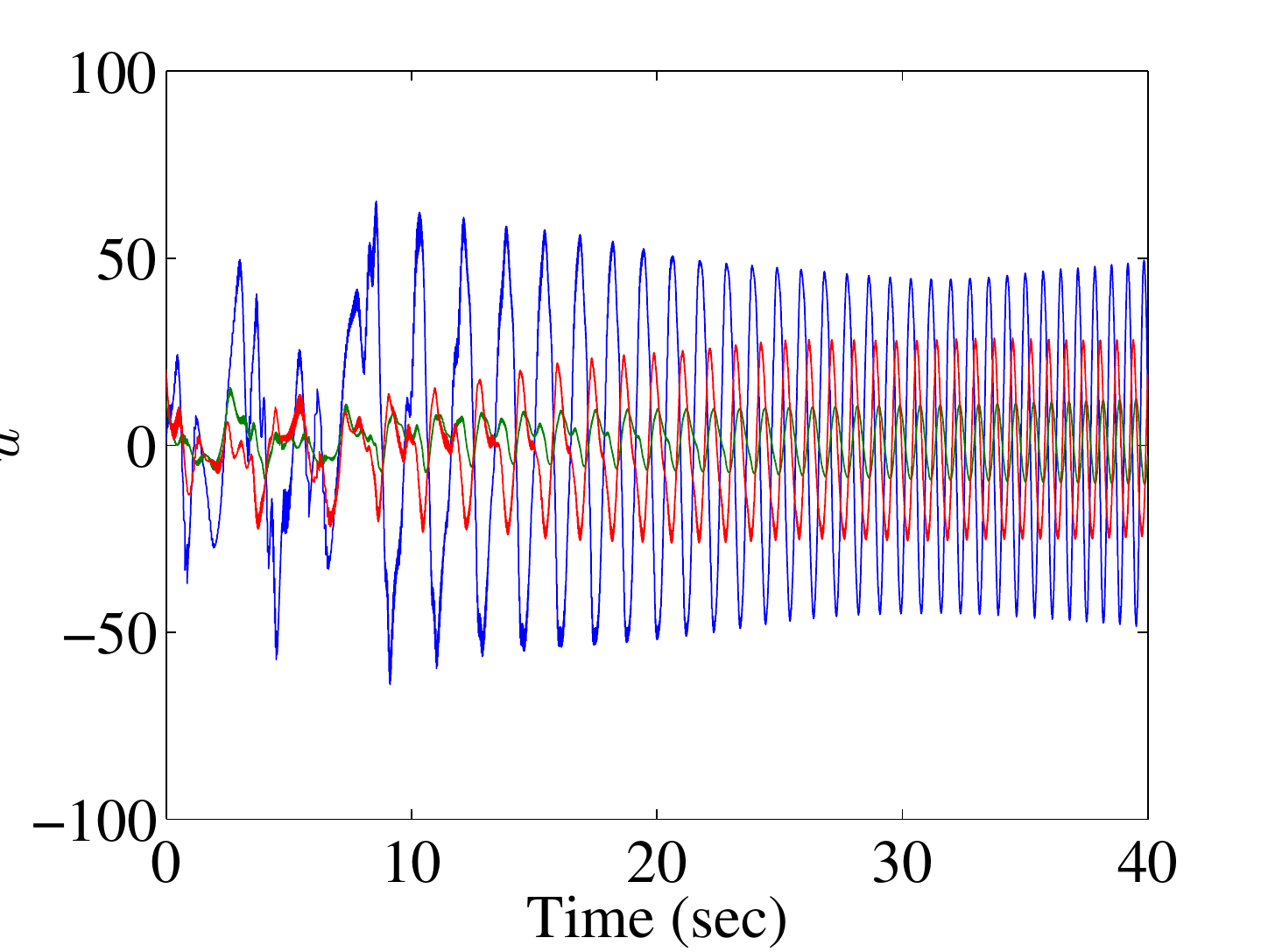}}
}
\vspace*{-0.02\columnwidth}
\centerline{
	\hspace*{0.015\columnwidth}
	\subfigure[Attitude tracking error (scalar part of quaternion)]{\includegraphics[width=0.5\columnwidth]{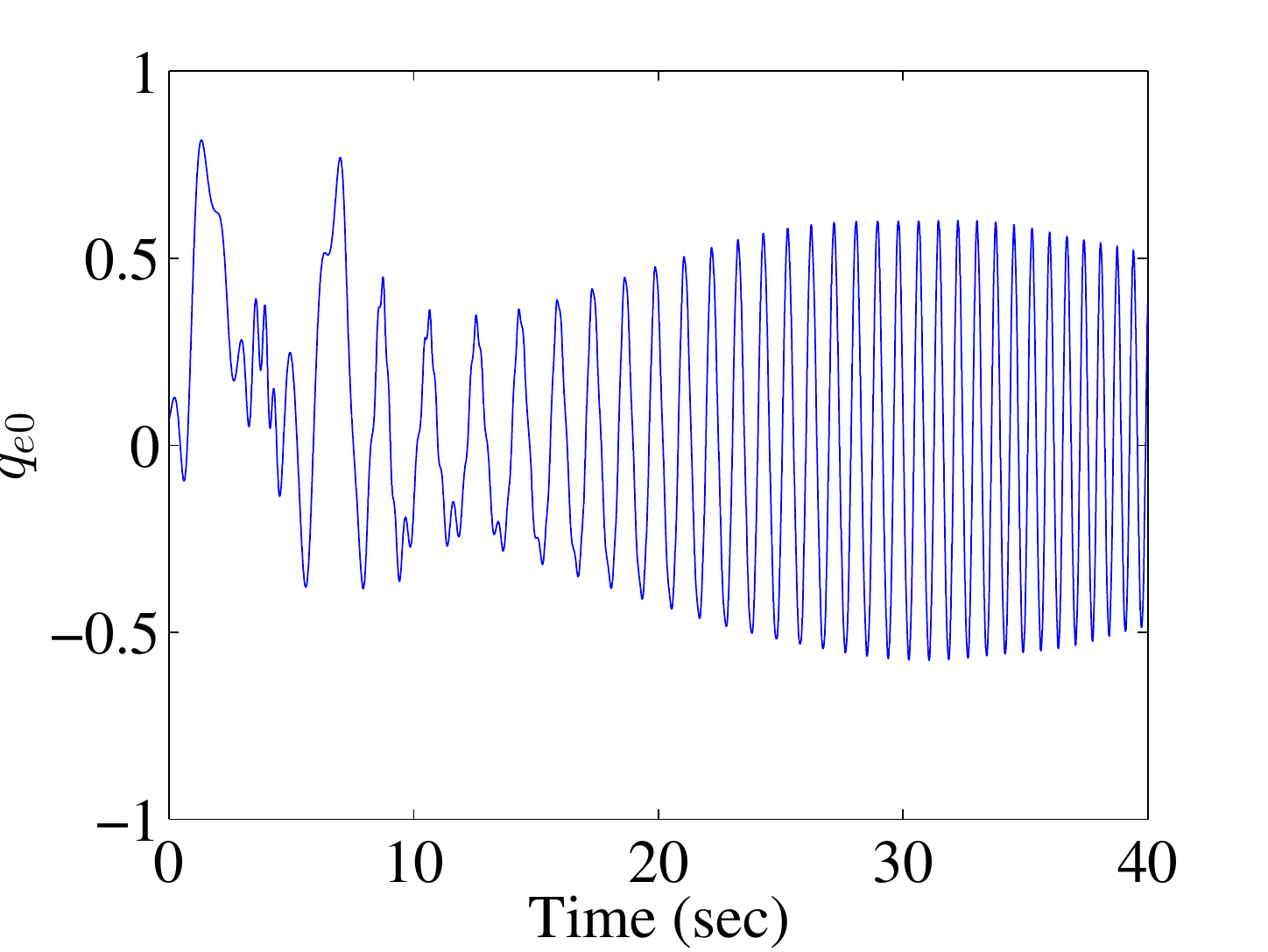}}
	\hspace*{-0.015\columnwidth}
	\subfigure[Attitude tracking error (vector part of quaternion)]{\includegraphics[width=0.5\columnwidth]{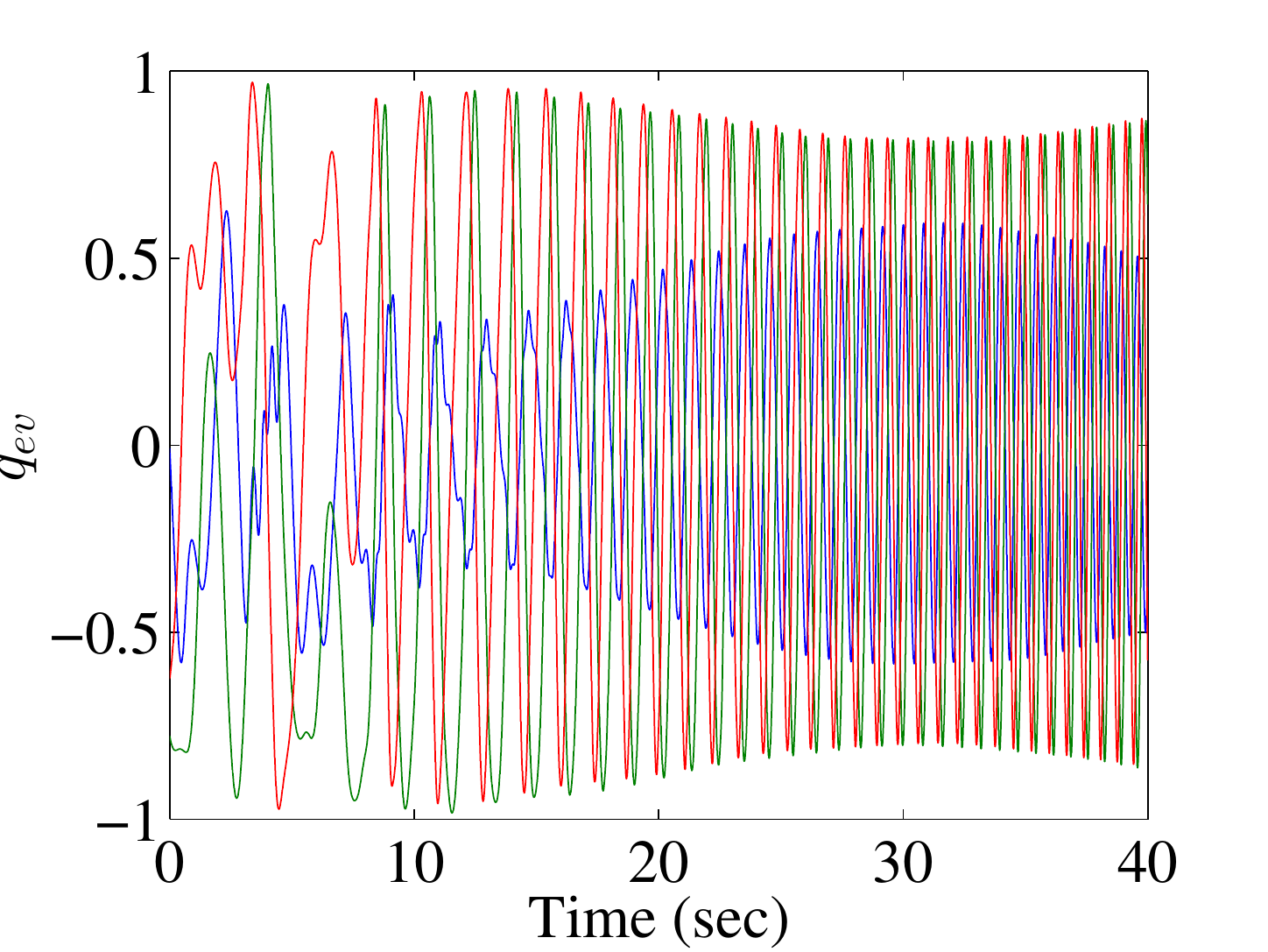}}
}	
\caption{Attitude tracking with the switching angular velocity observer in~\cite{Chu2014}}\label{fig:switch-2}
\end{figure}

\appendix[Proof of Proposition~\ref{prop:sep}]

For the weighting matrix $G=\mathrm{diag}[g_1,g_2,g_2]$ of the control system given at \refeqn{Psi}, let $\psi$ be a positive constant satisfying $\psi < \min\{g_2+g_3,g_3+g_1,g_1+g_2\}$.  Consider the following domain for the configuration of the attitude dynamics and the observer:
\begin{align*}
D=\{(R,\Omega,\bar R,\bar\Omega)\in (\SO\times\Re^3)^2\,|\, 
\Psi < \psi, \Psi_E <\bar\psi_E\},
\end{align*}
The subsequent stability proof is developed in this domain. We first show that the estimated attitude and angular velocity trajectories starting from the initial conditions satisfying \refeqn{roa1} and \refeqn{roa2} satisfy $\Psi_E < \bar\psi_E$ always, i.e., the estimated tragictory stay in the domain $D$. Recall the Lyapunov function $\mathcal{U}$ given at \refeqn{U}. For the initial estimates $\bar R(0)$ and $\bar\omega(0)$ satisfying \refeqn{roa1} and \refeqn{roa2}, we have
\begin{align*}
\mathcal{U}(0) < k_E (\bar\psi_E - \Psi_E(0)) + \Psi_E(0) = k_E\bar\psi_E.
\end{align*}
As $\mathcal{U}(t)$ is non-increasing from \refeqn{Udot}, we have
\begin{align*}
k_E \Psi_E(t) \leq \mathcal{U}(t) \leq \mathcal{U}(0) < k_E\bar\psi_E,
\end{align*}
which yields
\begin{align*}
\Psi_E(t) < \bar\psi_E < \min\{n_1,\frac{1}{2}(\tr[G_E]-\frac{\lam_M}{\lam_m}\|G_E\|)\}
\end{align*}
for all $t \geq 0$.

Consider the following Lyapunov function
	\begin{gather} 
	\V=\V_c+\V_o,
	\end{gather}
where $\V_c$ and $\V_o$ are related to the controller and the observer, respectively, and they are defined as
\begin{align*}
\V_c &=\frac{1}{2}e_\W^\T J_0e_\W +k_R\Psi +c_1e_R^\T J_0e_\W,\\
\V_o &=\U-c_2e_{\w_E}^\T e_{R_E}=\|e_{\w_E}\|^2+k_E\Psi_E-c_2e_{\w_E}^\T e_{R_E},
\end{align*}
for positive constants $c_1$ and $c_2$. It has been shown that $\V_c$ is positive definite about $(e_\W,e_R)=(0,0)$ when $c_1$ is sufficiently small, and it satisfies
\begin{align}
\alpha^T M_1 \alpha \leq \V_c \leq \alpha^T M_2 \alpha,\label{eq:VcB}
\end{align}
where $\alpha=[\|e_\Omega\|, \|e_R\|]^T\in\Re^2$, and the matrices $M_1,M_2\in\Re^{2\times 2}$ are defined as
\begin{align*}
M_1 = \frac{1}{2}\begin{bmatrix} \lambda_m & -c_1\lambda_M \\ 
-c_1\lambda_M & 2b_1 k_R\end{bmatrix},\;
M_2 = \frac{1}{2}\begin{bmatrix} \lambda_M & c_1\lambda_M \\ 
c_1\lambda_M & 2b_2 k_R\end{bmatrix},\quad
\end{align*}
for a constant $b_1,b_2$ that can be determined by the weighting matrix $G$ and $\psi$~\cite{LeeCST13}.

Similarly, from \refeqn{PsiE}, the second part of the Lyapunov function satisfies
	\begin{gather} 
	\xi^\T M_3\xi\leq\V_o\leq\xi^\T M_4\xi,
	\end{gather}
where $\xi=[\|e_{R_E}\|, \|e_{\w_E}\|]^\T\in\Re^2$ and 
	\begin{gather*} 
	M_3=\frac{1}{2}\begin{bmatrix} k_E\frac{2n_1}{n_2+n_3} & -c_2 \\ -c_2 & 2 \end{bmatrix},\;
	M_4=\frac{1}{2}\begin{bmatrix} k_E\frac{2n_1n_4}{n_5(n_1-\psi_E)} & c_2 \\ c_2 & 2 \end{bmatrix}.
	\end{gather*}
If the constant $c_2$ is chosen sufficiently small such that
	\begin{gather} 
	c_2<\mathrm{min}\{2\sqrt{\frac{k_E n_1}{n_2+n_3}},\, 2\sqrt{\frac{k_En_1n_4}{n_5(n_1-\psi_E)}}\}, \label{eq:c1}
	\end{gather}
then the matrices $M_3$ and $M_4$ are positive definite. From these, the Lyapunov function $\V$ is positive definite about $(e_\W,e_R,e_{\w_E},e_{R_E})=(0,0,0,0)$ and it is decrescent.

From \refeqn{ed1}-\refeqn{ed3} and \refeqn{free}, the time-derivative of $\V_c$ is
	\begin{align} 
	\dot\V_c
	&= (e_\W+c_1e_R)^\T J_0\dot{e}_\W +k_R\dot{\Psi} +c_1\dot{e}_R^\T J_0e_\W \no\\
	&\quad +k_Re_R^\T e_\W +c_1(E_ce_\W)^\T J_0e_\W \no\\
	&= -k_\W(e_\W+c_1e_R)^\T\bar{e}_\W-c_1k_R\|e_R\|^2 \no\\
	&\quad +c_1e_R^\T\hat{\chi}e_\W  +c_1(E_ce_\W)^\T J_0e_\W. \label{eq:Vc_dot0}
	\end{align}
The following properties of the error variables has been shown in~\cite{IEPC2013}:
	\begin{gather*} 
	\|E_c\|\leq \frac{1}{\sqrt{2}}\tr[G], \\
	\|\chi\|\leq \lam_M\|e_\W\|+B_1^*, \\
	\|e_R\|\leq B_2^*\triangleq \frac{1}{2}\sqrt{12p_2+3p_3},
	\end{gather*}
where the constants $p_2,p_3$ are defined as
		\begin{align*} 
		p_2 &= \mathrm{max}\{(g_1-g_2)^2,\, (g_2-g_3)^2,\, (g_3-g_1)^2\}, \\
		p_3 &= \mathrm{max}\{(g_1+g_2)^2,\, (g_2+g_3)^2,\, (g_3+g_1)^2\}. 
		\end{align*}
Applying these bounds to \refeqn{Vc_dot0}, we obtain
	\begin{align}
	\dot\V_c
	&\leq -k_\W(e_\W+c_1e_R)^\T\bar{e}_\W-c_1k_R\|e_R\|^2 \no\\ 
	&\quad +\frac{1}{2}c_1\lam_M(\sqrt{2}\tr[G]+B_2^*)\|e_\W\|^2 \no\\
	&\quad  +c_1(k_\W+B_1^*)\|e_R\|\|e_\W\|. \label{eq:dVc1} 
	\end{align}
From \refeqn{ev3} and the second part of \refeqn{eom1}, we can write $e_{\w_E}=RJ_0R^\T(\w-\bar\w)$. Therefore, we have
	\begin{align*} 
	R^\T e_{\w_E}=J_0R^\T(\w-\bar\w)=J_0(\W-\bar{\W}), 
	\end{align*}
which follows that
	\begin{align*} 
	\bar{\W}=\W-J_0^{-1}R^\T e_{\w_E}.
	\end{align*}
From this, the angular velocity error vector given by \refeqn{eWbar} can be rewritten as
	\begin{align} 
	\bar{e}_\W=e_\W-J_0^{-1}R^\T e_{\w_E}. \label{eq:bW}
	\end{align}
Substituting \refeqn{bW} into \refeqn{dVc1}, we obtain	
	\begin{align} 
	\dot\V_c 
	&\leq -B_3^*\|e_\W\|^2  -c_1k_R\|e_R\|^2 +c_1(k_\W+B_1^*)\|e_R\|\|e_\W\|	\no\\
	&\quad +k_\W\frac{1}{\lam_m}\|e_\W\|\|\mu\| +c_1k_\W \frac{1}{\lam_m}\|e_R\|\|\mu\|
	\label{eq:Vcdot}
	\end{align}
where $B_3^*=[k_\W-\frac{c_1\lam_M}{2}(\sqrt{2}\,\tr[G]+B_2^*)]$.	

Next, we find the time-derivative of $\V_o$. From \refeqn{Udot} and properties (v), (vi) of Proposition \ref{prop:erdyn}, we have
	\begin{align} 
	\dot{\V}_o
	&=\dot{\U}-c_2\dot{e}_{\w_E}^\T e_{R_E} -c_2e_{\w_E}^\T\dot{e}_{R_E} \no\\
	&= \dot{\U}+\frac{1}{2}c_2k_Ee_{R_E}^\T J^{-1}e_{R_E} \no\\ 
	&\quad -\frac{1}{2}c_2e_{\w_E}^\T(\tr[Q_EG_E]\I-Q_EG_E)\w_E.	\label{eq:dVo}
	\end{align}
Equation \refeqn{wE} can be rewritten as
	\begin{align}
	\w_E
	&=J^{-1}[J(\w-\bar\w)-k_ve_{R_E}] \no\\
	&=J^{-1}(e_{\w_E}-k_ve_{R_E}).\label{eq:wE2} 
	\end{align}
Substituting \refeqn{wE2} and \refeqn{Udot} into \refeqn{dVo}, we obtain
	\begin{align} 
	\dot{\V}_o
	&\leq -(\frac{k_v}{\lam_M}-\frac{c_2}{2\lam_m})k_E\|e_{R_E}\|^2 \no\\
	&\quad -(\frac{\tr[Q_EG_E]}{\lam_M}-\frac{\|G_E\|}{\lam_m})\frac{c_2}{2}\|e_{\w_E}^2\| \no\\
	&\quad +\frac{\tr[Q_EG_E]+\|G_E\|}{2\lam_m}c_2k_v\|e_{\w_E}\|\|e_{R_E}\|. \label{eq:Vodot}
	\end{align}	
Combining \refeqn{Vcdot} and \refeqn{Vodot}, the time-derivative of the Lyapunov function satisfies
	\begin{align*} 
	\dot{\V}
 	&\leq -\frac{B_3^*}{2}\|e_\W\|^2  -\frac{c_1}{2}k_R\|e_R\|^2 +c_1(k_\W+B_1^*)\|e_R\|\|e_\W\| \\
 	&\quad -\frac{c_1}{2}k_R\|e_R\|^2+\frac{c_1k_\W}{\lam_m}\|e_R\|\|e_{\w_E}\| -\frac{c_2A_E}{6\lam_M\lam_m}\|e_{\w_E}^2\|\\
	&\quad -\frac{B_3^*}{2}|e_\W\|^2 +\frac{k_\W}{\lam_m}\|e_\W\|\|e_{\w_E}\| -\frac{c_2A_E}{6\lam_M\lam_m}\|e_{\w_E}^2\|\\ 
	&\quad -\frac{2k_v\lam_m-c_2\lam_M}{2\lam_M\lam_m}k_E\|e_{R_E}\|^2 -\frac{c_2A_E}{6\lam_M\lam_m}\|e_{\w_E}^2\| \\
	&\quad +\frac{c_2k_v}{2\lam_m}B_E\|e_{\w_E}\|\|e_{R_E}\|,
	\end{align*}
where $A_E=\tr[Q_EG_E]\lam_m-\|G_E\|\lam_M\in\Re$ and $B_E=\tr[Q_EG_E]+\|G_E\|\in\Re$. Note that \refeqn{ratio} ensures that $A_E>0$. This is rearranged as the following matrix form:	
	\begin{align*} 
	\dot{\V}\leq -\al^\T W_1\al-\be^\T W_2\be -\zeta^\T W_3\zeta-\xi^\T W_4\xi, 
	\end{align*}
where $\al=[\|e_{\W}\|,\|e_{R}\|]^\T$, $\be=[\|e_R\|,\|e_{\w_E}\|]^\T$,  $\zeta=[\|e_{\W}\|,\|e_{\w_E}\|]^\T$, $\xi=[\|e_{R_E}\|, \|e_{\w_E}\|]^\T\in\Re^2$ and the matrices are defined as
	\begin{gather*} 
	W_1=\frac{c_1}{2}\left[\begin{array}{cc}\frac{B_3^*}{c_1} & k_\W+B_1^*\\ k_\W+B_1^* & k_R\end{array}\right],\\
	W_2=\frac{c_1}{2}\left[\begin{array}{cc}k_R &\frac{k_\W}{\lam_m} \\ \frac{k_\W}{\lam_m}&\frac{c_2A_E}{3c_1\lam_M} \end{array}\right],\\
	W_3=\frac{1}{2}\left[\begin{array}{cc} B_3^* & -\frac{k_\W}{\lam_m} \\ -\frac{k_\W}{\lam_m} & \frac{c_2A_E}{3\lam_M\lam_m}\end{array}\right],\\
	W_4=\frac{1}{2\lam_m}\left[\begin{array}{cc} k_E\frac{2k_v\lam_m-c_2\lam_M}{\lam_M} & -\frac{ck_vB_E}{2}  \\ -\frac{ck_vB_E}{2} &  \frac{c_2A_E}{3\lam_M} \end{array}\right].
	\end{gather*}
The constants $c_1,c_2$ and the controller gains can be chosen such that all of the above matrices are positive-definite. For example, if the constant $c_1$ is chosen such that
	\begin{align} 
	c_1 &\leq \min\big\{\frac{2k_\W}{\lam_M(\sqrt{2}\tr[G]+B_2^*)} ,\,\frac{c_2k_R\lam_m^2A_E}{3k_\W^2\lam_M} \no\\
	&\quad \frac{2k_Rk_\W}{k_R\lam_M(\sqrt{2}\tr[G]+B_2^*) +2(k_\W+B_1^*)^2}\big\},
	\end{align} 
then the matrices $W_1$ and $W_2$ are positive definite. Conditions on $c_2$ to guarantee the positive-definiteness of $W_3$ and $W_4$ can be derived similarly. Therefore, the zero equilibrium of the tracking errors and estimation errors is exponentially stable. 
	


\bibliography{ECC15}
\bibliographystyle{IEEEtran}
\end{document}